\def\dsp{\displaystyle}
\newcommand{\Pm}{\vect P}
\newcommand{\Mm}{\vect M}
\newcommand{\Bm}{B}
\def\Ac{{\cal A}}
\newcommand{\tAc}{{{\cal A}}}
\newcommand{\rmi}{{{ i }}}
\def\vect#1{\mbox{\boldmath{$#1$}}}
\def\vx{\vec{\vect x}}
\def\vy{\vec{\vect y}}
\def\vz{{ \vec{\vect z}}}
\newcommand{\ta}{{a}}
\newcommand{\smT}{{\gamma}}
\def\mC{\mathbb{C}}
\def\wG{G}
\def\wf{f}
\def\wl{{ l}}
\def\we{e}
\def\wg{g}
\def\wh{{h}}
\newcommand{\Rho}{\cal{X}}
\newcommand{\eps}{\varepsilon}
\renewcommand{\epsilon}{\eps}
\renewcommand{\leq}{\leqslant}
\renewcommand{\geq}{\geqslant}
\def\bfrho{\mbox{\boldmath$\rho$}}
\def\bfb{\mbox{\boldmath$b$}}
\def\bfm{\mbox{\boldmath$m$}}
\newcommand{\supp}{\mathrm{supp}}
\newtheorem{theorem}{Theorem}
\newtheorem{lemma}{Lemma}
\newtheorem{remark}{Remark}
\newtheorem{proposition}{Proposition}
\newtheorem{definition}{Definition}
\newenvironment{proof}{\paragraph{{\bf Proof:}}}{\hfill$\square$}
\title{Robust multifrequency imaging with MUSIC}
\author{Miguel Moscoso%
\thanks{Department of Mathematics, Universidad Carlos III de Madrid, Leganes, Madrid 28911, Spain ({moscoso@math.uc3m.es})} 
\and
Alexei Novikov%
\thanks{Mathematics Department, Penn State University, University Park, PA 16802 ({novikov@psu.edu})}
\and
George Papanicolaou%
\thanks{Department of Mathematics, Stanford University, Stanford, CA 94305 ({papanicolaou@stanford.edu})}
\and
Chrysoula Tsogka%
  \thanks{Applied Math Unit, University of California, Merced, 5200 North Lake Road, Merced, CA 95343 ({ctsogka@ucmerced.edu}).}%
 }
\begin{document}

\maketitle
\begin{abstract}
In this paper, we study the MUltiple SIgnal Classification (MUSIC) algorithm often used to image small targets when multiple measurement vectors are available. We show that this algorithm may be used when the imaging problem can be cast as a linear system that admits a special factorization. We discuss several active array imaging configurations where this factorization is exact, as well as other configurations where the factorization only holds approximately and, hence, the results provided by MUSIC deteriorate. We give special attention to the most general setting where an active array with an arbitrary number of transmitters and receivers uses signals of multiple frequencies  to image the targets. 
This setting provides all the possible diversity of information that can be obtained from the illuminations. We  give a theorem that shows that MUSIC is robust with respect to additive noise provided that the targets are well separated. The theorem also shows the relevance of using appropriate sets of controlled parameters, such as excitations, to form the images with MUSIC robustly.  We present numerical experiments that support our theoretical results.
\end{abstract}

\vspace{2pc}
\noindent{\it Keywords}: array imaging, multiple measurement vectors, MUSIC


\section{Introduction}
Imaging is an inverse problem in which we seek to reconstruct a medium's characteristics, such as the reflectivity,
by recording its response to one or more known excitations. The output is usually an image 
giving an estimate of an unknown characteristic in a bounded domain, the imaging window of interest. Although this 
problem is in all generality non-linear, it is often adequately formulated as a set of $\aleph$ linear systems of the form
\begin{equation}\label{family1intro}
\Ac_{\wl_q} \bfrho=\bfb_{\wl_q}\,, \quad q=1,\dots,\aleph. 
\end{equation}
Here, $ \bfrho\in\mC^K$ is the unknown vector we seek to estimate and 
$\bfb_{\wl_q}\in\mC^N$ are different measurement vectors.
The  essential  point  in (\ref{family1intro}) is that the model matrix $\Ac_{\wl_q}$
depends on a  parameter vector $\vect \wl_q=[l_{1q},l_{2q},\dots,l_{Kq}]^{\intercal}$  that contains the
experimental constants $l_{jq}$, such as the excitations, that we control and change  to  form  the  images.
To simplify the notation, we will denote the different excitations
by the scalar $q$ and write $\Ac_{q} \bfrho=\bfb_{q}$ instead, unless it is necessary to explicitly state that the model matrix, 
and the measurements, depend on a vector $\vect \wl_q$. 
We are interested in underdetermined linear systems, so $N<K$, where the unknown vector is M-sparse with $M \ll K$. 

To solve (\ref{family1intro}) we consider the 
MUltiple SIgnal Classification (MUSIC) algorithm which has been used successfully in signal processing 
\cite{Schmidt86,Kaveh86,Hayes96,Liao15,Liao16} and imaging \cite{DeGraaf98,DMG05, Rubsamen09, Borcea07,Fannjiang11,Griesmaier17}. 
In this work  we make the fundamental observation 
that the MUSIC algorithm gives the exact support of the solution 
of (\ref{family1intro}), in the noise free case, when the matrices $\Ac_{q}$ admit the following factorization
\begin{equation}\label{factor}
 \Ac_{q} =  \tAc \;  \Lambda_{q}, \,\,{\rm with }\ \Lambda_{q} \,\, {\rm diagonal},
 \end{equation}
and $\tAc$ independent of the parameter vector $\vect \wl_q$.
In this case, (\ref{family1intro}) can also be formulated as the Multiple Measurement Vector (MMV) problem
\begin{equation}\label{eq:mmv}
 \tAc  \, \bfrho_q =\bfb_{q}, \,\,{\rm with }\  \ \bfrho_q=   \Lambda_{q}\, \bfrho.
\end{equation}
Here,  the multiple unknown vectors $\bfrho_q$ share the same support $T=\supp(\bfrho)$, with $|T|=M$. The MMV formulation
is usually written as a matrix-matrix equation
\begin{equation}\label{eq:mmv_matrix}
 \tAc  \, \Rho = B\, , 
\end{equation}
where the unknown is now the matrix $\Rho \in  \mC^{K \times \aleph}$  
whose columns are the vectors $\bfrho_q=\Lambda_{q} \bfrho$, and $\Bm \in \mC^{N \times \aleph}$ is the data or observation matrix whose columns are the vectors $\bfb_q$. 

The main advantage of the MMV formulation  is 
that we can immediately infer that the data vectors $\bfb_q$ are linear combinations of the same M-columns of $\tAc$, those that belong to  $T$. 
The implication is that, in the absence of noise,  the columns of $\tAc$ indexed by $T$  span $R(B)$, the  range or column subspace of $\Bm$.  
Thus, MUSIC finds the support $T$ as the zero set of the orthogonal projections of the columns of $\tAc$ 
onto the left nullspace of the matrix $\Bm$, which is the orthogonal complement of  $R(B)$  and can be easily found with an SVD. Moreover, the support can be recovered exactly with MUSIC
under the assumption that all (M+1)-sets of columns of $\tAc$ are linearly independent.  The support $T$ can be recovered approximately if the data is noisy.
In Theorem~\ref{th:musicnoise} we quantify an acceptable level of noise for such approximate recovery.

The MMV problem can also be solved using an optimization perspective as described in \cite{Cotter05, MCW05, TROPP06-2-0, TROPP06-2}. The main idea is to seek the solution matrix $\Rho$
with the minimal $(2,1)$-norm, which consists in minimizing the $\ell_1$ norm of the vector formed by the $\ell_2$ norms of the rows of the unknown matrix $\Rho$. This guarantees the common support of the 
solution's columns. We do not pursue this approach here and refer the reader to \cite{CMP14} for an application of this formalism to imaging strong scattering scenes as well as to \cite{BMPT16} where an MMV 
formulation for synthetic aperture imaging of frequency and direction dependent reflectivity was introduced and analyzed.

In this paper, we present several configurations in array imaging that can be cast under the general framework discussed here, such as single- and multiple-frequency array imaging using single- or multiple-receivers. All 
these problems can be formulated as (\ref{family1intro}) in which multiple measurement vectors are recorded. We show that some array imaging problems  admit the factorization (\ref{factor}) and, thus, the support of the 
unknown can be  recovered exactly by MUSIC. However, there are other configurations such as multiple frequency imaging with several transmitters and receivers for which this factorization is not feasible. Still, we
show that  factorization (\ref{factor}) approximately holds under the paraxial approximation, {\em i.e.},  when the image region is far from the array and is small. 

We also consider the non-linear phase retrieval problem, which according to \cite{Novikov14,Moscoso16,Moscoso17} can be reduced to a linear system of the form (\ref{family1intro}). 
This requires intensity data corresponding to multiple coherent illuminations which are transformed to interferometric data using the polarization identity. 
We consider multiple frequency intensity data collected at a single receiver due to multiple coherent illuminations.

To summarize, the main contributions of this work are as follows.  We show (i) in Section  \ref{sec:la} that the support of the solution of (\ref{family1intro}) can be recovered exactly with MUSIC when the (noiseless) data can be structured so that the model matrix admits a factorization in terms of a universal model matrix multiplied by a diagonal matrix that depends on the excitation as in (\ref{factor}). Then the noisy case is considered in Theorem \ref{th:musicnoise} that gives conditions under which MUSIC is robust with respect to additive noise.  We also show (ii) that when we have full data diversity, that is, we have data from multiple sources, multiple receivers and multiple frequencies, then there is a data structure that is associated with a model matrix that admits an approximate factorization (\ref{factor}) in particular imaging regimes such as the paraxial regime that is considered in Section \ref{sec:ds}. As a consequence, MUSIC can be used with full interaction over multiple frequencies to image in this regime as illustrated in Section \ref{sec:numerics}.

 The paper is organized as follows.  In Section \ref{sec:array} we present the active array 
 imaging problem and its linear algebra formulation.  In Section \ref{sec:la} we discuss in an abstract linear algebra framework the conditions under 
 which MUSIC provides the exact solution to the MMV problem (\ref{eq:mmv}) and analyze its performance for noisy data. In Section \ref{sec:ds} 
 we consider some common configurations used in active array imaging and discuss the adequate data-structures to be 
 used in imaging with MUSIC. In particular, Section \ref{sec:ds} contains a description of our approximate MUSIC for multiple frequency imaging with several transmitters and receivers.
  In Section \ref{sec:numerics}, we explore with numerical simulations the performance of multifrequency MUSIC
 with intensity-only data. Section \ref{sec:concl} contains our conclusions.

\section{The active array imaging problem}
 \label{sec:array}
 The goal of array imaging is to form images inside a region of interest called the image window IW. 
In active array imaging the array probes the medium by sending signals and recording the echoes. 
Probing of the medium can be done with many different types of arrays that differ in their number of 
transmitters and receivers, their geometric layouts, or the type of signals they use for illumination. 
Moreover, they may use single frequency signals sent from different positions, or multifrequency signals 
sent from one or more positions.  
Obviously, the problem of active array imaging also depends 
on the receivers. They can record the intensities and phases of the signals that arrive to the array or only their intensities.

   \begin{figure}[htbp]
    \begin{center}
 \begin{picture}(0,0)%
\includegraphics[scale=1]{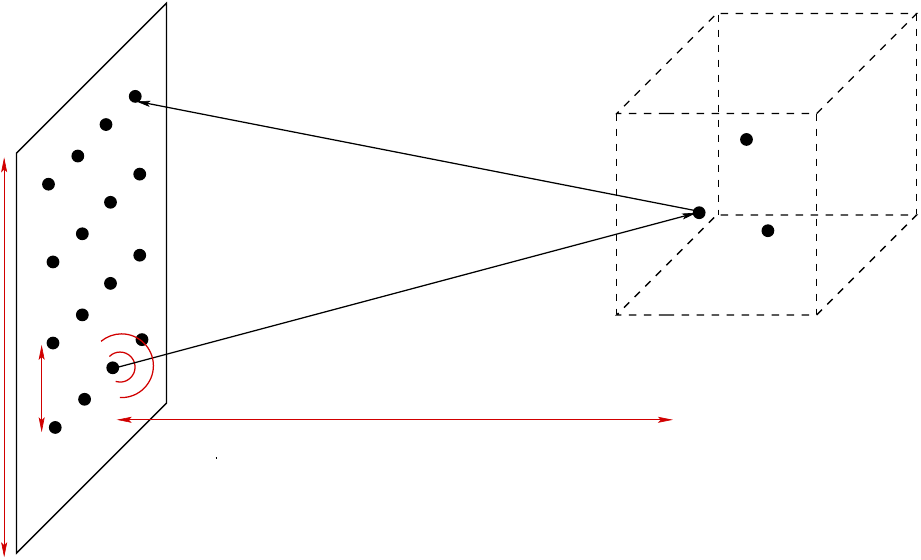}%
\end{picture}%
\setlength{\unitlength}{1579sp}%
\begingroup\makeatletter\ifx\SetFigFont\undefined%
\gdef\SetFigFont#1#2#3#4#5{%
  \reset@font\fontsize{#1}{#2pt}%
  \fontfamily{#3}\fontseries{#4}\fontshape{#5}%
  \selectfont}%
\fi\endgroup%
\begin{picture}(11024,6695)(399,-6908)
\put(9451,-4411){\makebox(0,0)[lb]{\smash{{\SetFigFont{9}{10.8}{\familydefault}{\mddefault}{\updefault}{\color[rgb]{0,0,0}IW}%
}}}}
\put(1651,-1486){\makebox(0,0)[lb]{\smash{{\SetFigFont{9}{10.8}{\familydefault}{\mddefault}{\updefault}{\color[rgb]{0,0,0}${\vx}_r$}%
}}}}
\put(1951,-4111){\makebox(0,0)[lb]{\smash{{\SetFigFont{9}{10.8}{\familydefault}{\mddefault}{\updefault}{\color[rgb]{0,0,0}$\lambda$}%
}}}}
\put(1501,-4786){\makebox(0,0)[lb]{\smash{{\SetFigFont{9}{10.8}{\familydefault}{\mddefault}{\updefault}{\color[rgb]{0,0,0}${\vx}_s$}%
}}}}
\put(4126,-5086){\makebox(0,0)[lb]{\smash{{\SetFigFont{9}{10.8}{\familydefault}{\mddefault}{\updefault}{\color[rgb]{0,0,0}$L$}%
}}}}
\put(526,-4036){\makebox(0,0)[lb]{\smash{{\SetFigFont{9}{10.8}{\familydefault}{\mddefault}{\updefault}{\color[rgb]{0,0,0}$a$}%
}}}}
\put(8701,-3061){\makebox(0,0)[lb]{\smash{{\SetFigFont{9}{10.8}{\familydefault}{\mddefault}{\updefault}{\color[rgb]{0,0,0}${\vz}_j$}%
}}}}
\put(931,-4951){\makebox(0,0)[lb]{\smash{{\SetFigFont{9}{10.8}{\familydefault}{\mddefault}{\updefault}{\color[rgb]{0,0,0}$h$}%
}}}}
\end{picture}%
    \caption{General setup of an array imaging problem. The transducer at $\vx_s$  emits a probing signal and the reflected signals are recorded at $\vx_r$. The scatterers located at ${\vz}_{j}$, $j=1,\dots,M$ 
    are at distance $L$ from the array and inside the image window IW. }
    \label{cs_3d_illustration}
    \end{center}
    \end{figure}

In Figure \ref{cs_3d_illustration}, an array of size $a$ probes the medium by sending and recording signals from positions $\vx_s$ and $\vx_r$,  respectively, $s,r = 1,2,\ldots,N$. It can send signals of one or several frequencies $\omega_l$,  $l=1,\dots,S$.
The goal is to reconstruct a sparse scene consisting of $M$ point-scatterers at a distance $L$ from the array. The positions of the scatterers in the IW are denoted by $\vz_j$, and 
their reflectivities by $\alpha_j \in\mC$, $j=1,\dots,M$. The ambient medium between the array and the scatterers can be homogeneous or inhomogeneous. In this paper, we consider that wave propagation is described by the scalar wave equation. Nevertheless, 
the methodology described here directly extends to other types of vector waves such as electromagnetic waves.

In order to form the images we discretize the IW using a uniform grid of points $\vy_k$, $k=1,\ldots,K$, and we introduce the {\it  true 
reflectivity vector}\footnote{Superscript $\intercal$  here, and throughout the paper, means transpose. It looks similar to $T$ that we use  as the index set of the support of a vector. As such, $T$ appears as a subscript.}

$$\vect \rho=[\rho_{1},\ldots,\rho_{K}]^{\intercal}
\in\mC^K\, ,$$ 
such that
\[
\rho_{k} =
\left\{ 
\begin{array}{ll}
\alpha_j, & \hbox{ if } \| \vz_j - \vy_{k} \|_\infty <   \hbox{grid-size, for some } j=1,\ldots,M,\\ 
0, & \hbox{ otherwise} \\
\end{array}
\right.
\]
We will not assume that the scatterers
lie on the grid, so $\{\vz_1,\ldots,\vz_M \}\not\subset\{\vy_1,\ldots,\vy_K\}$ in general. 
To write the data received on the array in a compact form, we define the Green's function vector   
\begin{equation}\label{eq:GreenFuncVec}
\vect \wg(\vy;\omega)=[\wG(\vx_{1},\vy;\omega), \wG(\vx_{2},\vy;\omega),\ldots,
\wG(\vx_{N},\vy;\omega)]^{\intercal}\,
\end{equation}
at location $\vy$ in the IW, where 
\begin{equation}\label{greenfunc}
\wG(\vx,\vy;\omega)=\frac{\exp(\rmi\kappa|\vx-\vy|)}{4\pi|\vx-\vy|}\, ,\quad
 \kappa=\frac{\omega}{c_0},
\end{equation}
denotes the free-space Green's function of
the background medium. It characterizes the propagation of a 
signal of angular frequency $\omega$ from point $\vy$ to point $\vx$, so 
(\ref{eq:GreenFuncVec}) 
represents the signal received at the array due to a point source of frequency $\omega$
at $\vy$.

We assume that the scatterers are far apart or that the reflectivities are small, so multiple scattering between them is negligible. In this case,  the Born approximation holds and, thus, the response at $\vx_r$ 
due to a pulse of angular frequency $\omega_l$, amplitude one and phase zero sent from $\vx_s$, and reflected by the $M$ scatterers,  is given by 
\begin{equation}
\label{response}
\begin{array}{ll}
P(\vx_r,\vx_s;\omega_l) &\dsp \!\!=\!\! \sum_{j=1}^M\alpha_j G (\vx_r, \vz_j; \omega_l) \, G (\vz_j,\vx_s; \omega_l)  \\
&\dsp \!\!=\!\!  \sum_{k=1}^K\rho_{k}  G (\vx_r, \vy_k; \omega_l)  G (\vy_k,\vx_s; \omega_l).
\end{array}
\end{equation}
When all the sources and the receivers in the array are used for imaging, the data are arranged in the so called single frequency response matrix 
\begin{equation}
\label{responsematrix}
\Pm(\omega_l)=[P(\vx_r,\vx_s;\omega_l)]_{r,s=1}^N
=\sum_{k=1}^K\rho_{k} \vect \wg(\vy_{k}; \omega_l) \, \vect\wg^{\intercal}(\vy_{k}; \omega_l).
\end{equation}
If only one frequency is used to probe the medium, all  the  information available  for  imaging 
is  contained  in  (\ref{responsematrix}). 
The most general configuration is the one of multiple sources, multiple receivers and multiple frequencies. 
In this case, the array response forms a tensor with elements $P(\vx_r,\vx_s;\omega_l)$, $r,s=1,\ldots,N$, and $l=1,\ldots,S$.

\section{The MUSIC algorithm}
\label{sec:la}
MUSIC is a subspace imaging algorithm based on the decomposition of the measurements into two orthogonal domains. The dominant one is due to the
signals and is referred to as the signal subspace, while the other is attributed to the noise and is referred to as the noise subspace. Both are easily found through the SVD of the data matrix
\begin{equation}\label{eq:datamatrix}
 \Bm =
 \left(
 \begin{array}{cccc}
         b_{11} &  b_{12} & \dots & b_{1\aleph}\\
         b_{21} &  b_{22} & \dots & b_{2\aleph} \\
         \dots &  \dots & \dots & \dots\\
          b_{N1} &  b_{N2} & \dots & b_{N\aleph}
        \end{array}
        \right)
 = 
\left(
\begin{array}{cccc}
    \uparrow & \uparrow&        & \uparrow \\
    \vect b_1 & 
    \vect b_2& 
    \ldots & 
    \vect b_\aleph    \\
\downarrow & \downarrow&        & \downarrow
  \end{array}
\right) \,\,\in\mC^{N\times \aleph},
 \end{equation}
whose column vectors $\vect b_q $ 
are obtained from a family of linear systems (\ref{family1intro}).

Our first  result is Proposition \ref{prop1}, which is the key observation that MUSIC provides the exact support of the unknown vector $\bfrho$ when the matrices $\Ac_{q}$ in the original problem (\ref{family1intro}) admit 
a factorization of the form (\ref{factor}). Physically, this factorization means that the data vectors $\vect b_q $ are just different weighted sums of the same columns of the matrix $\tAc$ in (\ref{factor}).

In this framework, we also obtain Theorem \ref{th:musicnoise} which gives conditions for MUSIC to be robust with respect to  noise in the data.

\begin{proposition} \label{prop1}
Assume $\vect \rho \in \mathbb{C}^{K}$ is $M$-sparse with $M<N$, and assume that (\ref{family1intro}) can be rewritten in the form 
\begin{equation}\label{family1}
 \tAc  \, \Lambda_{q} \, \vect \rho = \vect b_q\, , \quad q=1,\dots,\aleph,
\end{equation}
with the matrix
\begin{equation}\label{eq:matrixtilde}
 \tAc 
 = 
\left(
\begin{array}{cccc}
    \uparrow & \uparrow&        & \uparrow \\
    \vect {\ta}_1   & \vect  {\ta}_2& \ldots & \vect  {\ta}_K   \\
\downarrow & \downarrow&        & \downarrow \\
  \end{array}
\right) \,\,\in\mC^{N\times K} \, 
 \end{equation}
independent of the parameter vector 
$\vect \wl_q=[l_{1 q},l_{2 q},\dots,l_{K q}]^{\intercal}$ and thus fixed, and 
\begin{equation}\label{eq:matrixlambda}
\Lambda_{q} = 
\left(
\begin{array}{cccc}
    l_{1 q}& 0 & & \\
    0 &l_{2 q}& & \\
    & & \ddots & \\
    & & 0 & l_{K q}
 \end{array}
\right) \,\,\in\mC^{K\times K}\, 
 \end{equation}
diagonal. Then, under the assumptions that all sets of $M +1$ columns of $\tAc$ are linearly independent, 
 and the rank of the data matrix $B$ is $M$, 
 MUSIC provides the exact support of $\bfrho$ if the data are noiseless. 
\end{proposition}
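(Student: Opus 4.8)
The plan is to establish two facts: first, that the column space $R(B)$ of the data matrix coincides with the span of the columns $\vect{a}_k$ of $\tAc$ indexed by the support $T = \supp(\bfrho)$; second, that a column $\vect{a}_k$ lies in this subspace if and only if $k \in T$. Together these imply that the zero set of the orthogonal projection of the columns $\vect{a}_k$ onto the left nullspace of $B$ (equivalently, onto $R(B)^\perp$) is exactly $T$, which is precisely what the MUSIC algorithm computes.

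For the first fact, I would start from the factorized system (\ref{family1}). Writing $\bfrho_q = \Lambda_q \bfrho$, each $\bfrho_q$ is supported in $T$ (multiplying by a diagonal matrix cannot enlarge the support), so $\vect b_q = \tAc \bfrho_q = \sum_{k \in T} (\bfrho_q)_k \, \vect a_k$ is a linear combination of the $M$ columns $\{\vect a_k : k \in T\}$. Hence every column of $B$, and therefore all of $R(B)$, lies in $\mathrm{span}\{\vect a_k : k \in T\}$. Since all $M+1$ columns of $\tAc$ are linearly independent, in particular any $M$ of them are, so $\{\vect a_k : k \in T\}$ is a linearly independent set spanning an $M$-dimensional subspace; combined with the hypothesis that $\mathrm{rank}(B) = M$, the inclusion $R(B) \subseteq \mathrm{span}\{\vect a_k : k \in T\}$ between an $M$-dimensional space and an $M$-dimensional space forces equality, $R(B) = \mathrm{span}\{\vect a_k : k \in T\}$.

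For the second fact, note that if $k \in T$ then trivially $\vect a_k \in \mathrm{span}\{\vect a_j : j \in T\} = R(B)$, so its projection onto $R(B)^\perp$ vanishes. Conversely, suppose $k \notin T$ but $\vect a_k \in R(B) = \mathrm{span}\{\vect a_j : j \in T\}$. Then $\vect a_k$ together with the $M$ columns $\{\vect a_j : j \in T\}$ forms a set of $M+1$ columns of $\tAc$ that is linearly dependent, contradicting the assumption that every set of $M+1$ columns of $\tAc$ is linearly independent. Hence for $k \notin T$ the projection of $\vect a_k$ onto $R(B)^\perp$ is nonzero. Therefore the MUSIC imaging functional, which is (the reciprocal of, or otherwise detects) the norm of this projection, is zero exactly on $T$, so MUSIC recovers the support exactly.

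The main obstacle — really the only subtle point — is the dimension-counting step in the first fact: one must genuinely use the hypothesis $\mathrm{rank}(B) = M$ together with the linear independence of the columns indexed by $T$ to upgrade the containment $R(B) \subseteq \mathrm{span}\{\vect a_k : k \in T\}$ to an equality, since a priori some reflectivities $\rho_k$ could conspire with the choice of excitations $\Lambda_q$ to make $R(B)$ a proper subspace. In the array-imaging applications this rank condition is what one checks in practice (e.g. enough diversity of excitations), and here it is simply assumed. Everything else is the standard MUSIC argument, and the role of the $(M+1)$-column independence assumption (a Kruskal-rank / unique-representation condition on $\tAc$) is exactly to rule out spurious zeros of the imaging functional off the support.
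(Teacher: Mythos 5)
Your proof is correct and follows essentially the same route as the paper's: the data columns lie in $\mathrm{span}\{\vect a_k : k\in T\}$, the $(M+1)$-column independence excludes any off-support column from that span, and the support is read off as the zero set of the projections onto the orthogonal complement. The only difference is that you make explicit the dimension-counting step (using $\mathrm{rank}(B)=M$ to upgrade the inclusion $R(B)\subseteq\mathrm{span}\{\vect a_k : k\in T\}$ to an equality), which the paper's proof leaves implicit; this is a worthwhile clarification but not a different argument.
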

\begin{remark}  
The assumption that rank of the data matrix $B$ is $M$ means that the excitations are sufficiently diverse, which is usually the case in practice. 
\end{remark}
\begin{proof}
All data vectors $\vect b_q$, $q=1,\ldots,\aleph$, are linear combinations of the same $M$ columns $\vect {\ta}_k$ of $\tAc$, indexed by $T=\supp(\vect \rho)$, with $M=|T|$.
Thus, the columns of $\tAc$ indexed by $T$ 
span a vector subspace of $\mC^N$ called the signal subspace. 
Furthermore, if all sets of $M  +1$ columns of $\tAc$ are linearly independent, 
no other column of $\tAc$ is contained in  the signal subspace 
in the noiseless case.
Hence, the unknown support $T$ is uniquely determined by the zero set of the
projections of the columns of $\tAc$ onto the noise subspace, 
which is the orthogonal complement to the signal  subspace. 
\end{proof}

The objective of the MUSIC algorithm is to find the support $T$ of an unknown sparse vector  $\vect \rho =[\rho_1, \rho_2, \dots, \rho_K]^{\intercal}$, when a number of nonzero entries $M$ is much smaller 
than its length $K$. With a sufficiently diverse number  of experiments $\aleph\ge M$ we create a data matrix
$\Bm$, and we compute its SVD
\begin{equation}\label{svdB}
B=U\Sigma V^\ast
=\sum_{j=1}^{K}\sigma_j\vect u_j\vect v_j^\ast\, .
\end{equation}
If the data are noiseless there are exactly $M$ nonzero singular values $\sigma_1>\sigma_2>\dots > \sigma_M>0$ with
corresponding left singular vectors $\vect u_j$, $j=1,\dots,M$, that span the signal subspace. 
The remaining singular values $\sigma_j$, $j=M+1,\dots,K$, are zero, and the corresponding left singular vectors 
span the noise subspace.  Since the set of  columns of $\tAc$ indexed by $T=\supp(\vect \rho)$ also spans the signal subspace,
the sought support $T$ 
corresponds to the zero set of the orthogonal projections
of the columns vectors  $\vect {\ta}_k$   
onto the noise subspace. Thus, 
it follows that the support of $\vect \rho$ can be found among  the  peaks of the imaging functional
\begin{equation}
\label{MUSIC}
\dsp \mathcal{I}_k^{\rm \sc MUSIC}=\frac{\|\vect {\ta}_k\|_{\ell_2}}{\sum_{j=M+1}^{N} | \langle \vect {\ta}_k, \vect u_j  \rangle |^2 }  \,, \,\, k=1,\dots,K.
\end{equation} 
In (\ref{MUSIC}), the numerator is a normalization factor. If all sets of $M+1$ columns of $\tAc$ are linearly independent, the peaks of~(\ref{MUSIC}) exactly coincide with the  support of $\vect \rho$.

Once the support of $\vect \rho$ is recovered, the problem (\ref{family1}) typically becomes overdetermined 
($N>M$) and the nonzero values of $\bfrho$ can be easily found by solving the linear system restricted to the given support with an $\ell_2$ or an $\ell_1$ method \cite{CMP16}.

 Consider imaging with noisy data. 
It follows from Weyl's theorem \cite{Weyl12} that when noise is added to the data 
 so $\Bm\rightarrow B^\delta$ with $\|B^\delta -B \|_{\ell_2}<\delta$, 
then no singular value $\sigma^\delta$ moves more than the norm of the perturbation, i.e., $\|\sigma^\delta -\sigma\|_{\ell_2}<\delta$. 
It follows that (i) perturbed and unperturbed singular values are paired, and (ii) the spectral gap between the zero and the nonzero singular values remains large 
if the smallest nonzero unperturbed singular value $\sigma_M\gg\delta$. Hence, if the noise is not too large,  we can determine the number of scatterers because it equals  the number of significant  
singular values of the data matrix $\Bm^\delta$.

The signal and noise subspaces are also perturbed in the presence of noise. It can be shown that the perturbed and unperturbed subspaces also remain close,  with changes proportional to the reciprocal of the spectral gap $\beta=\sigma^\delta_{M} - \sigma_{M+1}$ \cite{Wedin72}. 
We refer to \cite{Liao16}, and references therein, for a recent discussion about how much noise the 
MUSIC algorithm can tolerate.
Next, we give a result that states that MUSIC is robust provided
certain orthogonality conditions hold. 
For this theorem we introduce the parameter matrix
\begin{equation}
\label{eq:L}
L= 
\left(
\begin{array}{cccc}
   l_{11}& l_{12}&        & l_{1\aleph}\\
   l_{21}& l_{22}&        & l_{2\aleph}\\
    \vdots &  \vdots &     &  \vdots \\
   l_{K1}& l_{K2}&        & l_{K\aleph} 
  \end{array}
\right) 
\,\,\in\mC^{K\times \aleph} \,\, ,
\end{equation}
with which problem (\ref{family1}) can be rewritten as $ \tAc  X L = \Bm$, with $X =$Diag$(\vect \rho)$ (see (\ref{eq:AXL}) below).
 In order to formulate our next result we introduce the following notation.

\begin{definition}\label{submatrix}
Suppose $T=\supp(\vect{\rho})$. We denote by $X_T$ be the sub-matrix of $X$ where we keep the rows 
that correspond to $T$. Similarly,  we denote by $\vect y_T$  the sub-vector of any vector
$\vect y$ where we keep the entries that correspond to $T$.
\end{definition}
\begin{theorem}\label{th:musicnoise}
Assume $\vect \rho \in \mathbb{C}^{K}$ is $M$-sparse with $T=\supp(\vect \rho)$. 
Let $X =$Diag$(\vect \rho)$  be a diagonal matrix that solves
\begin{equation}\label{eq:AXL}
 \tAc  X L = \Bm,
\end{equation}
with $\Bm$ and $L$ given in (\ref{eq:datamatrix}) and (\ref{eq:L}), respectively. 
 Let  
\begin{equation}\label{ortho_cond}
\smT=\sigma_{\min}(L_T)
\end{equation}
be the minimal singular value of $L_T$.
Suppose the perturbed matrix $\Bm^\delta$ satisfies $\sigma_{\max}(B^\delta- B)\leq \delta$, and that the 
columns of $\tAc$ are normalized to one, that is  $\|  \vect {\ta}_i \|_{\ell_2} =1$ $\forall i$. 

If for some  $\eps <1/3$ 
the columns from the support of $\vect \rho$ satisfy the following approximate orthogonality condition
\begin{equation} \label{new_o}
  \forall  i, j \in T, \, i \neq j, |\langle  \vect { a}_i, \vect { a}_j \rangle |  < \frac{\eps}{M-1},
\end{equation}
and   $\delta$ is small so that  
  \begin{equation}\label{detection}
  2 \delta < \mu \,  \smT \, (1- 2 \epsilon),\quad \mbox{with} \quad \mu =\min_{\rho_i \neq 0}\{ |\rho_i| \},
  \end{equation}
then we can find a decomposition  $\Bm^\delta=Q^\delta + Q_0^\delta$ such that
 orthogonal projections onto the subspaces  $R(Q^\delta)$
 and $R(B)$ 
are close, so
\begin{equation}\label{close}
\|P_{R(Q^\delta)} - P_{R(B)}\|_{\ell_2} \leq\frac{\delta}{\mu \, \smT \, (1- 2 \epsilon)}.
\end{equation}
\end{theorem}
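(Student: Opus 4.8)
The plan is to peel the support off $\Bm$, observe that the noiseless data then spans an $M$--dimensional column space that does not depend on the excitations and on which $\Bm$ is uniformly well conditioned (by (\ref{new_o})), and then finish with a subspace perturbation estimate. Since $X=\mathrm{Diag}(\vect\rho)$ is diagonal and supported on $T$, equation (\ref{eq:AXL}) factors as $\Bm=\tAc X L=\tAc_T\,D\,L_T$, where $\tAc_T\in\mC^{N\times M}$ is the submatrix of $\tAc$ made of the columns $\vect{\ta}_i$, $i\in T$, the matrix $D\in\mC^{M\times M}$ is the diagonal matrix of the nonzero entries of $X$ (so $\sigma_{\min}(D)=\mu$), and $L_T$ is the $M\times\aleph$ matrix of rows of $L$ indexed by $T$ appearing in (\ref{ortho_cond}). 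The assumption that every $M+1$ columns of $\tAc$ are linearly independent forces every $M$ of them to be independent, so $\tAc_T$ has full column rank; $D$ is invertible because $\mu>0$; and $L_T$ has full row rank because $\smT=\sigma_{\min}(L_T)>0$ (otherwise (\ref{detection}) could not hold). Hence $\ker\Bm=\ker L_T$ and $R(\Bm)=R(\tAc_T)$, an $M$--dimensional subspace of $\mC^N$ independent of the excitations, and $\Bm$ has exactly $M$ nonzero singular values $\sigma_1(\Bm)\geq\cdots\geq\sigma_M(\Bm)>0$.

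The core estimate is a lower bound on $\sigma_M(\Bm)$. By (\ref{new_o}) and the normalization $\|\vect{\ta}_i\|_{\ell_2}=1$, the Gram matrix $\tAc_T^{*}\tAc_T$ has unit diagonal and off--diagonal entries of modulus below $\eps/(M-1)$, so by Gershgorin's theorem all of its eigenvalues lie in $(1-\eps,1+\eps)$ and $\sigma_{\min}(\tAc_T)\geq\sqrt{1-\eps}\geq 1-2\eps$, the last inequality because $\eps<1/3$. For any $\vect v$ orthogonal to $\ker L_T$ one has $\|L_T\vect v\|_{\ell_2}\geq\smT\|\vect v\|_{\ell_2}$, hence
\[
\|\Bm\vect v\|_{\ell_2}=\|\tAc_T\,D\,L_T\vect v\|_{\ell_2}\geq\sigma_{\min}(\tAc_T)\,\mu\,\|L_T\vect v\|_{\ell_2}\geq(1-2\eps)\,\mu\,\smT\,\|\vect v\|_{\ell_2},
\]
and since $\ker\Bm=\ker L_T$ this gives $\sigma_M(\Bm)\geq(1-2\eps)\,\mu\,\smT$. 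Combined with (\ref{detection}) it yields $\sigma_M(\Bm)>2\delta$: the smallest signal singular value exceeds twice the noise level, so by Weyl's theorem $\Bm^\delta$ has exactly $M$ singular values larger than $\delta$ and all the others at most $\delta$, and the number of targets is recovered correctly.

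For the decomposition I would split $\Bm^\delta$ into its parts in $R(\Bm)$ and in $R(\Bm)^{\perp}$, namely $Q^\delta:=P_{R(\Bm)}\Bm^\delta$ and $Q_0^\delta:=(I-P_{R(\Bm)})\Bm^\delta$. Then $Q^\delta=\Bm+P_{R(\Bm)}(\Bm^\delta-\Bm)$, so $\sigma_M(Q^\delta)\geq\sigma_M(\Bm)-\delta>0$ by Weyl and $Q^\delta$ has rank $M$; since $R(Q^\delta)\subseteq R(\Bm)$ and the two subspaces share the dimension $M$, in fact $R(Q^\delta)=R(\Bm)$, so the left--hand side of (\ref{close}) is $0$ and (\ref{close}) holds. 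The same ingredients control the subspace MUSIC actually builds, the span $R(\widehat U_1)$ of the first $M$ left singular vectors of $\Bm^\delta$: from the truncated SVD $\Bm^\delta=\widehat U_1\widehat\Sigma_1\widehat V_1^{*}+\cdots$ and the identity $(I-P_{R(\Bm)})\Bm=0$ one gets the Wedin--type relation $(I-P_{R(\Bm)})\widehat U_1=(I-P_{R(\Bm)})(\Bm^\delta-\Bm)\widehat V_1\widehat\Sigma_1^{-1}$, and therefore, using $\|P_{R(\widehat U_1)}-P_{R(\Bm)}\|_{\ell_2}=\|(I-P_{R(\Bm)})\widehat U_1\|_{\ell_2}$ (the two subspaces having the same dimension),
\[
\|P_{R(\widehat U_1)}-P_{R(\Bm)}\|_{\ell_2}\leq\frac{\sigma_{\max}(\Bm^\delta-\Bm)}{\sigma_M(\Bm^\delta)}\leq\frac{\delta}{\sigma_M(\Bm)-\delta}\leq\frac{2\delta}{\mu\,\smT\,(1-2\eps)},
\]
the last two inequalities following from $\sigma_{\max}(\Bm^\delta-\Bm)\leq\delta$, Weyl's theorem, the core estimate, and (\ref{detection}); so MUSIC's signal subspace obeys a bound of the same form as (\ref{close}).

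The main obstacle is the core estimate: turning the pairwise near--orthogonality (\ref{new_o}) into the uniform conditioning bound $\sigma_M(\Bm)\geq(1-2\eps)\,\mu\,\smT$, because it is this spectral gap of $\Bm$ that the perturbation argument converts into the stability estimate (\ref{close}) for the signal subspace. Once the factorization $\Bm=\tAc_T\,D\,L_T$ has identified $R(\Bm)$ with the excitation--independent column space $R(\tAc_T)$, and this lower bound is established, exhibiting the decomposition and controlling the projections is essentially formal.
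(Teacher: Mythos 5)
Your proof is correct, and it shares the paper's overall architecture: a lower bound $\sigma_M(\Bm)\geq(1-2\eps)\,\mu\,\smT$ on the smallest nonzero singular value of the noiseless data matrix, followed by a Wedin-type subspace perturbation estimate. The two key steps are, however, executed differently, and the comparison is instructive. For the singular value bound, the paper works directly with the quadratic form $\vect z^*\Bm\Bm^*\vect z$ and proves by hand the near-isometry (\ref{yo_yo}) of $\vect z\mapsto(\tAc^*\vect z)_T$ on $R(B)$, expanding the sums of inner products term by term; you instead factor $\Bm=\tAc_T D L_T$ and apply Gershgorin to the Gram matrix $\tAc_T^*\tAc_T$, which yields the same estimate in two lines and with a slightly better constant ($\sqrt{1-\eps}$ in place of $1-2\eps$). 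One small blemish: you invoke the ``every $M+1$ columns independent'' hypothesis of Proposition \ref{prop1}, which is not among the assumptions of Theorem \ref{th:musicnoise}; fortunately it is redundant, since your Gershgorin bound already forces $\tAc_T$ to have full column rank. For the final step, your observation that the literal existential statement is satisfied vacuously by $Q^\delta:=P_{R(B)}\Bm^\delta$ (so that $R(Q^\delta)=R(B)$ and the left side of (\ref{close}) is zero) is a fair criticism of the theorem as written: the content only appears when $Q^\delta$ is required to be computable from the noisy data, namely the rank-$M$ SVD truncation of $\Bm^\delta$, which is what the paper intends (it ``discards $Q^\delta_0$ by truncation of the singular values smaller than the noise level'') and what you then prove as well via the identity $(I-P_{R(B)})\widehat U_1=(I-P_{R(B)})(\Bm^\delta-\Bm)\widehat V_1\widehat\Sigma_1^{-1}$. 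Your resulting constant $2\delta/(\mu\,\smT\,(1-2\eps))$ carries an extra factor of $2$ relative to (\ref{close}); it comes from bounding $\sigma_M(\Bm^\delta)\geq\sigma_M(\Bm)-\delta$ and then invoking (\ref{detection}), and it is in fact the constant that the paper's own application of Theorem \ref{wed} (with $\alpha=0$ and $\beta=\sigma_{\min}(Q^\delta)\geq\mu\,\smT\,(1-2\eps)-\delta$) rigorously supports, since the denominator printed in (\ref{close}) silently drops the $-\delta$.
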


Theorem \ref{th:musicnoise} is, to the best of our knowledge, new. 
It gives conditions under which the perturbed and unperturbed subspaces remain close so MUSIC is robust with respect to additive noise. 
Note that Theorem \ref{th:musicnoise} allows the columns of $\tAc$ to be almost collinear as long as the columns that are in the support of the solution  are approximately orthogonal, 
so  (\ref{new_o}) holds. The fact that the error in the orthogonal projections (\ref{close}) is inversely proportional to the minimal singular value $\smT$ 
(see (\ref{ortho_cond})) can be interpreted as a quality control on the different sets of parameters $\vect \wl_q$ used to collect the data.
It says that MUSIC is not robust if these sets are chosen so that the data are not diverse enough so $\smT$ is small. 
In order for MUSIC to be robust the parameter vectors $\vect \wl_q$ that form the columns of $L$ should be as orthogonal as possible.
The proof of Theorem \ref{th:musicnoise} is given in \ref{app:proofmusic}. 

We also refer to  \cite{Lee12}  for  a subspace-augmented MUSIC algorithm that improves the performance of MUSIC under unfavorable conditions such as the lack of diversity of the data matrix.

\section{Data structures in active array imaging}  \label{sec:ds}
We consider here the active array imaging problem introduced in Section \ref{sec:array}. 
Our aim is to examine for which configurations the imaging problem can be written in the MMV form (\ref{eq:mmv}) so that MUSIC can be used.  It is known that MUSIC could be used 
successfully in two cases: either for fixed frequency data ($S=1$) and multiple transducers, or for a single transducer and multiple frequencies. We show that  
 a  factorization as in (\ref{factor}) can be obtained for these two cases in Subsections~\ref{sec:sfmr} and~\ref{sec:pronny}, respectively. We discuss these two cases in detail, because they are
  the building blocks of our construction for multiple frequencies and many transducers. We show in Subsection~\ref{sec:mus_mr_mf} how to 
  construct an approximate 
  MUSIC for multiple frequencies and many transducers. To the best of our knowledge, this is the first, albeit approximate, MUSIC algorithm for multiple frequencies and many transducers. The approximation 
  holds in the paraxial regime, when the array and the IW are small and the distance between them is large. We investigate numerically the quality of this approximation in Subsection~\ref{sec:data}, where we chose to use intensity-only measurements. This the most challenging type of data, that we consider in this work. In Subsection~\ref{sec:mus_mr_mf_nophases} 
  (and \ref{sec:phaseR}) we explain
  how this type of data  can be recast as a linear system  of the form~(\ref{eq:mmv}).

\subsection{Single frequency signals and multiple receivers}
\label{sec:sfmr}

Fix  a frequency $\omega$.  We denote  by $\vect\wf(\omega)=[\wf_1(\omega),\ldots,\wf_N(\omega)]^{\intercal}$ 
 the illumination vector whose entries are the signals sent from the corresponding sources $\vx_s$, $s=1,\dots,N$, on the array.  
 The most basic  illumination vectors are $\vect\we_{i}$, with all entries equal to zero except the $i$th entry which is 1. We will often use them in this work.
 Given an illumination $\vect \wf(\omega)$, our imaging data are
\begin{equation}\label{dataf}
\vect b_{\wf(\omega)}=\Pm (\omega)\vect \wf (\omega),
\end{equation}
 where $\Pm (\omega)$ is the single frequency response matrix (\ref{responsematrix}). These are the 
echoes recorded at the $N$ receivers  located at $\vx_r$, $r=1,\dots,N$, on the array.  

Let
$$\wg_{\wf(\omega)}^{(k)}=\vect\wg(\vy_k; \omega)^{\intercal}\vect\wf(\omega),\ k=1,\dots,K,$$ 
be the fields at the grid positions $\vy_k$ in the IW,  with $\vect\wg(\vy_k; \omega)$ given by (\ref{eq:GreenFuncVec}). Then, 
the data depend on the vector $\vect \wl=[\wg_{\wf (\omega)}^{(1)},\wg_{\wf (\omega)}^{(2)},\dots,\wg_{\wf (\omega)}^{(K)}]^{\intercal}$.
With a slight abuse of notation from Section \ref{sec:la}, we have indicated in (\ref{dataf}) that 
the control vectors are the illuminations $\vect \wf(\omega)$
instead of the vectors $\vect \wl$. The latter depend
on the Green's function vectors $\vect \wg(\vy;\omega)$ that are fixed by the physical layout, and on 
 the illumination vector $\vect\wf(\omega)$ that we control.

\begin{lemma}
Suppose the data $\vect b_{\wf(\omega)}$, corresponding to an illumination $\vect\wf(\omega)$
is obtained by
$$\vect b_{\wf(\omega)}=\Pm (\omega)\vect \wf (\omega)$$
Then 
\begin{equation}
\label{eq:justOne}
\bfb_{\wf(\omega)} = \Ac_{\wf(\omega)}\bfrho  \ ; \ \Ac_{\wf(\omega)} =  \tAc \, \Lambda_{\wf (\omega)}
\end{equation} 
 where 
\begin{equation}\label{eq:matrix1freqbis}
\tAc
 = 
\left(
\begin{array}{cccc}
    \uparrow & \uparrow&        & \uparrow \\
     \vect\wg(\vy_1; \omega)   &      \vect\wg(\vy_2; \omega)    &     \ldots &     \vect\wg(\vy_K; \omega)    \\
\downarrow & \downarrow&        & \downarrow \\
  \end{array}
  \right)
 \,\,\in\mC^{N\times K}, 
 \end{equation} 
and
\begin{equation}\label{eq:matrix1freqdiag}
\Lambda_{\wf (\omega)} = 
\left(
\begin{array}{cccc}
    \wg_{\wf(\omega)}^{(1)}& 0 & & \\
    0 & \wg_{\wf (\omega)}^{(2)}& & \\
    & & \ddots & \\
    & & 0 & \wg_{\wf (\omega)}^{(k)}
  \end{array}
  \right) \,\,\in\mC^{K\times K}. 
 \end{equation}
\end{lemma}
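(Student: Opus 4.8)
The plan is to verify directly that the claimed factorization reproduces the data vector $\vect b_{\wf(\omega)} = \Pm(\omega)\vect\wf(\omega)$. The starting point is the representation of the single frequency response matrix in (\ref{responsematrix}), namely $\Pm(\omega) = \sum_{k=1}^K \rho_k\,\vect\wg(\vy_k;\omega)\,\vect\wg^\intercal(\vy_k;\omega)$. Multiplying on the right by the illumination $\vect\wf(\omega)$ and using associativity of matrix products, I would write
\begin{equation*}
\vect b_{\wf(\omega)} = \Pm(\omega)\vect\wf(\omega) = \sum_{k=1}^K \rho_k\,\vect\wg(\vy_k;\omega)\,\bigl(\vect\wg^\intercal(\vy_k;\omega)\vect\wf(\omega)\bigr) = \sum_{k=1}^K \rho_k\,\wg_{\wf(\omega)}^{(k)}\,\vect\wg(\vy_k;\omega),
\end{equation*}
where in the last equality I have recognized the scalar $\vect\wg^\intercal(\vy_k;\omega)\vect\wf(\omega)$ as precisely the field $\wg_{\wf(\omega)}^{(k)}$ defined just before the lemma.

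Next I would read this sum as a matrix-vector product. The vector $\sum_{k=1}^K \bigl(\rho_k\,\wg_{\wf(\omega)}^{(k)}\bigr)\vect\wg(\vy_k;\omega)$ is exactly the matrix $\tAc$ of (\ref{eq:matrix1freqbis}), whose $k$-th column is $\vect\wg(\vy_k;\omega)$, applied to the vector with entries $\rho_k\,\wg_{\wf(\omega)}^{(k)}$. That coefficient vector is in turn $\Lambda_{\wf(\omega)}\bfrho$, with $\Lambda_{\wf(\omega)}$ the diagonal matrix of (\ref{eq:matrix1freqdiag}) carrying the entries $\wg_{\wf(\omega)}^{(k)}$ on its diagonal. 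Hence $\vect b_{\wf(\omega)} = \tAc\,\Lambda_{\wf(\omega)}\,\bfrho$, which is (\ref{eq:justOne}) with $\Ac_{\wf(\omega)} = \tAc\,\Lambda_{\wf(\omega)}$; note that $\tAc$ depends only on the array geometry and the fixed frequency $\omega$, not on the illumination, as required by the factorization (\ref{factor}).

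There is essentially no obstacle here: the only point needing a word of care is the bookkeeping of which factor absorbs which quantity — the true reflectivities $\rho_k$ live in $\bfrho$, while the controlled illumination enters only through the fields $\wg_{\wf(\omega)}^{(k)}$, and these must be placed in the diagonal $\Lambda_{\wf(\omega)}$ so that $\tAc$ remains independent of $\vect\wf(\omega)$. One should also observe that $\wg_{\wf(\omega)}^{(k)} = 0$ is possible for some $k$, so $\Lambda_{\wf(\omega)}$ need not be invertible; this does not affect the identity (\ref{eq:justOne}) and is only relevant when one later wants to pass to the MMV form (\ref{eq:mmv}). I would close by remarking that the transpose (rather than conjugate transpose) in the outer product in (\ref{responsematrix}), inherited from reciprocity of the Green's function, is what makes $\vect\wg^\intercal(\vy_k;\omega)\vect\wf(\omega)$ the physically correct field value, so the identification with $\wg_{\wf(\omega)}^{(k)}$ is exact.
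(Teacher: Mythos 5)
Your proof is correct and follows essentially the same route as the paper's: the paper simply writes down the explicit formula for $\Ac_{\wf(\omega)}$, whose $k$th column is $\wg_{\wf(\omega)}^{(k)}\,\vect\wg(\vy_k;\omega)$, and observes the factorization is immediate, which is exactly the computation you carry out in detail. Your added remarks on the non-invertibility of $\Lambda_{\wf(\omega)}$ and the role of the transpose are sensible but not needed for the identity itself.
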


The proof of this Lemma immediately follows from the explicit formula
$$
 \Ac_{\wf(\omega)}
 = 
\left(
\begin{array}{cccc}
    \uparrow & \uparrow&        & \uparrow \\
    \wg_{\wf(\omega)}^{(1)} \vect\wg(\vy_1; \omega)   &     \wg_{\wf(\omega)}^{(2)} \vect\wg(\vy_2; \omega)    &  \ldots & 
    \wg_{\wf(\omega)}^{(K)} \vect\wg(\vy_K; \omega)    \\
\downarrow & \downarrow&        & \downarrow \\
  \end{array}
  \right) \,\,\in\mC^{N\times K}.
$$ 

A few remarks are now in order. The Lemma guarantees that for {\it any  family } $\bfb_{\wf_q(\omega)}$, $q=1, \dots, \aleph$, of  illuminations
 the decomposition
\begin{equation}
\label{eq:familyX}
\Ac_{\wf_q(\omega)}\bfrho = \bfb_{\wf_q(\omega)}
\end{equation} 
holds. Hence,  it follows from the discussion in Section \ref{sec:la} that 
 the support of $\bfrho$ can be found  with MUSIC exactly  if enough data  vectors $\vect b_{q}=\vect b_{\wf_q(\omega)}$ are available.
How to choose illuminations for these data vectors? A natural choice is to use the $\aleph=N$ illuminations 
$\vect\wf_q(\omega)=  \vect {e}_q$.
Then, the data-matrix is $B=\Pm (\omega),$  the single frequency response matrix~(\ref{responsematrix}). 
This is a  typical  choice in practice.

Secondly, in the noisy case the robustness of MUSIC depends  on $\smT$ defined in (\ref{ortho_cond})
as the minimum singular vector of the sub-matrix of $L$ with rows corresponding to the support of $\bfrho$.
 Let us investigate further this optimality 
for the single-frequency regime. Here, the illumination matrix is
\[
L= 
\left(
\begin{array}{cccc}
\uparrow & \uparrow&       & \uparrow \\
{\tAc}^{\intercal} \vect\wf_1(\omega) & 
 {\tAc}^{\intercal} \vect\wf_2(\omega) & \ldots & 
  {\tAc}^{\intercal} \vect\wf_{\aleph}(\omega) \\
  \downarrow & \downarrow&        & \downarrow \\
  \end{array}
\right) \in \mC^{K\times \aleph}\, .
\]
The $i$th column 
${ \tAc}^{\intercal} \vect\wf_i(\omega)=[\wg_{\wf_i(\omega)}^{(1)},\wg_{\wf_i(\omega)}^{(2)},\dots,\wg_{\wf_i(\omega)}^{(K)}]^{\intercal}$ of matrix $L$ contains the fields at all grid positions $\vy_k$, $k=1,\ldots,K$, due to illumination   $\vect\wf_i(\omega)$. If we use the $\aleph=N$ illuminations 
$\vect\wf_q(\omega)= f(\omega) \vect {e}_q$, then
$
L=  f(\omega) { \tAc}^{\intercal}.
$
Thus, assuming $\tAc$  satisfies the conditions of Theorem  \ref{th:musicnoise}, we get
$$
\smT=\sigma_{\rm min} (L_T) \ge (1-2\varepsilon) | f(\omega)|\,.
$$

\subsection{Multiple frequencies and one transducer: the one-dimensional problem}
\label{sec:pronny}

Consider a one-dimensional multifrequency imaging problem where we use only one transducer that works as source and  receiver.
Denote by  ${y}_n=L+(n-1) \Delta {y} $  the distance between the transducer and the scatterer of reflectivity $\rho_n$,  $n=1,\dots,K$.  Then, 
 \begin{equation}\label{fft_like}
 \sum_{n=1}^{K} e^{i  2  \kappa_m   {y}_n} \rho_n = b_m\, , \quad m=1,\dots,S,
 \end{equation}
relates the positions and reflectivities of the scatterers to the measurements $b_m$ at  frequencies $\omega_m=\kappa_m\,c_0$, where $c_0$ is the wave speed in a homogeneous medium.
In this problem, we seek to recover the unknown vector  $\bfrho=[\rho_1,\rho_2, \dots,\rho_{K}]$ from the multifrequency data vector 
$\vect b = [b_1, b_2, \dots, b_{S}]$ recorded at the single transducer. 

Problem (\ref{fft_like}) is well known in the signal processing literature as the estimation of signal parameters from a noisy exponential data sequence \cite{Stoicabook}. It can be solved efficiently with several methods, we refer for example to the SVD-prony method \cite{Kumaresan90} and the matrix pencil method \cite{Hua90}. We explain in this section how MUSIC can be used to find the solution for this one-dimensional imaging problem. In the next section we built upon this methodology to propose a multiple frequency MUSIC algorithm for the array imaging problem with many sources and many receivers.

We certainly can write
(\ref{fft_like}) in matrix form  $A  \bfrho=\vect b$, but we will only have one data vector $\vect b \in\mC^{S}$. 
The next assumption allows to elegantly formulate our data in the MMV format~(\ref{eq:mmv}) using 
a Prony-type argument~\cite{Prony} (see for example \cite{Griesmaier17}). 
Namely, suppose that the measurements are obtained at {\it  equally spaced } wavenumbers 
$\kappa_m = \kappa_1+ (m-1) \Delta \kappa$,   $m=1,2,\dots, S$, and let $S= 2 \aleph -1$. Then,  fill up the  
$\aleph \times \aleph$ data matrix $B$ as the square Toeplitz matrix
 \begin{equation}\label{1d_B}
 B=\left( \begin{array}{cccc}
         b_{1} &  b_{2} & \dots & b_{\aleph}\\
         b_{2} &  b_{3} & \dots & b_{\aleph+1} \\
         \dots &  \dots & \dots & \dots\\
          b_{\aleph} &  b_{\aleph+1} & \dots & b_{2 \aleph-1 }\\
        \end{array} \right).
 \end{equation} 
It is straightforward to verify the following claim.

\begin{lemma}
If $\vect b_q$ is the $q$th column of the matrix $\Bm$ in (\ref{1d_B}), then
$$
 \tAc \, \Lambda_q \, \vect \rho   = \vect b_q, \, q=1,2,\dots, \aleph,
$$
where
 \begin{equation}\label{1d_A}
  \tAc =  
\left( \begin{array}{cccc}
         e^{i  2  \kappa_1   {y}_1}   &  e^{i  2  \kappa_1   {y}_2}  & \dots & e^{i  2  \kappa_1   {y}_{K}} \\
         e^{i  2  \kappa_2   {y}_1}  &  e^{i  2  \kappa_2   {y}_2}  & \dots & e^{i  2  \kappa_2   {y}_{K}}  \\
         \dots &  \dots & \dots & \dots\\
          e^{i  2  \kappa_{\aleph}   {y}_1}  & e^{i  2  \kappa_{\aleph}   {y}_2}  & \dots & e^{i  2  \kappa_{\aleph}   {y}_{K}} 
        \end{array} \right)\, ,
 \end{equation}
 and the $K\times K$ diagonal matrices
$$  \Lambda_q =   \left(  \Lambda_1 \right)^q  , \,\, \hbox{ with } \Lambda_1 : =  \left( \begin{array}{ccccc}
         e^{i  2  \Delta \kappa   {y}_1}   & 0  & \dots &0& 0 \\
         0  &  e^{i  2  \Delta \kappa   {y}_2}  & \dots &0& 0  \\
         \dots &  \dots & \dots & e^{i  2  \Delta \kappa   {y}_{K-1}}  & 0\\
         0 & 0  & \dots & 0& e^{i  2  \Delta \kappa   {y}_{K}} \\
        \end{array} \right). 
 $$
\end{lemma}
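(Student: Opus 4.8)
The plan is to verify the claimed factorization directly by tracing what the $q$th column of the Toeplitz matrix $B$ in~(\ref{1d_B}) equals in terms of the data~(\ref{fft_like}), and then checking that $\tAc\,\Lambda_q\,\vect\rho$ reproduces it. First I would write out the $q$th column of $B$: its entries are $b_q, b_{q+1}, \dots, b_{q+\aleph-1}$, i.e.\ $(\vect b_q)_p = b_{p+q-1}$ for $p = 1, \dots, \aleph$. Then I would substitute the data model~(\ref{fft_like}), which gives $b_{p+q-1} = \sum_{n=1}^K e^{i2\kappa_{p+q-1} y_n}\rho_n$. The key arithmetic step is to split the exponent using the equally-spaced assumption $\kappa_m = \kappa_1 + (m-1)\Delta\kappa$: one checks $\kappa_{p+q-1} = \kappa_p + (q-1)\Delta\kappa$, hence $e^{i2\kappa_{p+q-1}y_n} = e^{i2\kappa_p y_n}\cdot e^{i2(q-1)\Delta\kappa\, y_n}$.

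From there the factorization falls out: $(\vect b_q)_p = \sum_{n=1}^K e^{i2\kappa_p y_n}\left(e^{i2(q-1)\Delta\kappa\, y_n}\rho_n\right)$. The first factor $e^{i2\kappa_p y_n}$ is exactly the $(p,n)$ entry of $\tAc$ in~(\ref{1d_A}), and the coefficient multiplying $\rho_n$ is $e^{i2(q-1)\Delta\kappa\, y_n}$, which is precisely the $(n,n)$ diagonal entry of $(\Lambda_1)^{q-1}$. I would then note that the indexing in the statement has $\Lambda_q = (\Lambda_1)^q$; careful bookkeeping shows the consistent reading is $\Lambda_q = (\Lambda_1)^{q-1}$ so that $\Lambda_1$ itself corresponds to $q=1$ (equivalently, one absorbs the constant $e^{i2\kappa_1 y_n}$ versus $e^{i2\Delta\kappa y_n}$ into the definition of the base wavenumber), and I would just make the exponent convention explicit in the proof so that $\tAc\,\Lambda_q\,\vect\rho = \vect b_q$ holds as stated. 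Writing this out, $\left(\tAc\,\Lambda_q\,\vect\rho\right)_p = \sum_{n=1}^K (\tAc)_{pn} (\Lambda_q)_{nn}\rho_n = \sum_{n=1}^K e^{i2\kappa_p y_n} e^{i2(q-1)\Delta\kappa\,y_n}\rho_n = (\vect b_q)_p$, which is the claim.

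There is no real obstacle here; the lemma is a direct computation, and the only thing requiring care is the index convention for the power of $\Lambda_1$ (whether the exponent is $q$ or $q-1$) and making sure the Toeplitz structure of~(\ref{1d_B}) is read off correctly so that column $q$ shifts the frequency index by $q-1$. I would therefore keep the proof to two or three lines: state $(\vect b_q)_p = b_{p+q-1}$, apply~(\ref{fft_like}), factor the exponential via $\kappa_{p+q-1} = \kappa_p + (q-1)\Delta\kappa$, and recognize the two resulting factors as the entries of $\tAc$ and $\Lambda_1^{\,q-1}$ respectively. The payoff, worth a one-sentence remark, is that this exhibits exactly the structure~(\ref{factor}) with a parameter-independent $\tAc$ and diagonal $\Lambda_q$, so Proposition~\ref{prop1} applies and MUSIC recovers $\supp(\vect\rho)$ from the Toeplitz data matrix in the noiseless case.
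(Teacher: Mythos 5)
Your proof is correct and is exactly the direct verification the paper has in mind (the paper offers no written proof, stating only that the claim is ``straightforward to verify''). You are also right about the indexing: since $(\vect b_q)_p=b_{p+q-1}$ and $\kappa_{p+q-1}=\kappa_p+(q-1)\Delta\kappa$, the diagonal factor is $(\Lambda_1)^{q-1}$, so the lemma's convention $\Lambda_q=(\Lambda_1)^q$ is off by one as written; this is a harmless typo that does not affect the MMV structure or the applicability of MUSIC.
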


As promised, we have obtained the desired structure of our data matrix $B$ for MUSIC to work. The key here was to stack the data in the cyclic fashion~(\ref{1d_B}). Such stacking worked because wavenumbers were equally spaced.
 Clearly, $B$ does not have to be square. As always, it  
 needs to have at least $M$ linearly independent columns for MUSIC to recover $M$ scatterers.

\subsection{Multiple frequency signals, multiple sources and receivers}
\label{sec:mus_mr_mf}
Finally, we consider the most general case in which multiple frequency signals 
are used to probe the medium using several transducers that emit and record them. 
This case considers all the possible diversity of information that
can be obtained from the illuminations.  
We discuss first the situation in which the receivers measure amplitudes and phases, and then the case in which they can only measure 
amplitudes squared.

The idea to stack data in the cyclic fashion~(\ref{1d_B}) motivated us to think whether there is a way to organize multiple frequency data that guarantees our decomposition
\begin{equation}\label{factor1}
\tAc \, \Lambda_q \, \vect \rho   = \vect b_q, \,\, q=1,2,\dots, \aleph.
\end{equation}
We were not able to find  an exact factorization~(\ref{factor1}) in general, and therefore, at present, MUSIC cannot be used to identify the support of $\bfrho$ exactly. 
 We claim, however, that factorization~(\ref{factor1}) is approximately valid in the  paraxial regime $\lambda\ll a\ll L$ if we choose
 \begin{equation}\label{Pstack}
B=\Pm^c : = [\Pm(\omega_1)^{\intercal}, \Pm(\omega_2)^{\intercal},\dots,\Pm(\omega_S)^{\intercal}]^{\intercal}\, ,
\end{equation}
 where  $\Pm (\omega_k)$ are the single frequency  $\omega_k$ response matrices~(\ref{responsematrix}). In this case 
$\aleph = N$,
 where $N$ is the number of transducers. Indeed, denote $\kappa_c=\omega_c/c_0$ as  the central wavenumber, 
 $\vy_j=(\vect y_j,L+\eta_j)$, and $\vx_s=(\vect x_s,0)$. Then, we have:

\begin{lemma} \label{lemma_parax}
Suppose we are in the paraxial regime, and the IW is small compared to $L$.
If $\vect b_q$ is the $q$th column of the matrix $\Bm$ in~(\ref{Pstack}), then
\begin{equation} \label{eq:mfmusic}
 \tAc_{q} \bfrho = \vect b_{q},\, \mbox{with}\,\, \tAc_q \approx \tAc\, \Lambda_{q}, \, q=1,\dots,\aleph,
\end{equation}
where $\tAc$ and $\Lambda_{q}$ are given by
\begin{equation}\label{eq:matrixSfreqhalfapprox}
 \tAc
 =
\left( 
\begin{array}{cccc}
    \uparrow & \uparrow&        & \uparrow \\
    \vect\wh(\vy_1; \omega_1)   & 
    \vect\wh(\vy_2; \omega_1)    & 
    \ldots & 
    \vect\wh(\vy_K; \omega_1)    \\
\downarrow & \downarrow&        & \downarrow \\
    \uparrow & \uparrow&        & \uparrow \\
    \vect\wh(\vy_1; \omega_2)   & 
    \vect\wh(\vy_2; \omega_2)    & 
    \ldots & 
    \vect\wh(\vy_K; \omega_2)    \\
\downarrow & \downarrow&        & \downarrow \\
\vdots & \vdots&        & \vdots \\
    \uparrow & \uparrow&        & \uparrow \\
     \vect\wh(\vy_1; \omega_{S})   & 
     \vect\wh(\vy_2; \omega_{S})    & 
    \ldots & 
    \vect\wh(\vy_K; \omega_{S})    \\
\downarrow & \downarrow&        & \downarrow
\end{array} \right) 
 \end{equation}
with $\vect\wh(\vy_j; \omega_l)=e^{i\kappa_l(L+\eta_j)} \vect\wg(\vy_j; \omega_l)$, and
\begin{equation}\label{eq:matrixSfreqdiag}
\Lambda_{q} = 
\left( \begin{array}{cccc}
    e^{i\kappa_c(\vect x_q - \vect y_1)^2/2L}& 0 & & \\
    0 & e^{i\kappa_c(\vect x_q - \vect y_2)^2/2L}& & \\
    & & \ddots & \\
    & & 0 & e^{i\kappa_c(\vect x_q - \vect y_K)^2/2L}
  \end{array} \right) \, .
 \end{equation}
 The approximation is of order $O\left( \frac{B a^2}{c_0 L} + \frac{\omega_c a^4}{c_0L^3} \right).$
\end{lemma}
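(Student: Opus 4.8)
The plan is to expand each entry $P(\vx_r,\vx_s;\omega_l)$ of the stacked matrix $\Pm^c$ using the paraxial (Fresnel) approximation of the Green's function and to show that the $\vx_s$-dependence separates off as the diagonal factor $\Lambda_q$. First I would recall that in the regime $\lambda\ll a\ll L$ the free-space Green's function admits the expansion
\begin{equation*}
G(\vx_s,\vy_j;\omega_l)=\frac{e^{i\kappa_l|\vx_s-\vy_j|}}{4\pi|\vx_s-\vy_j|}
\approx \frac{e^{i\kappa_l(L+\eta_j)}}{4\pi L}\,
e^{i\kappa_l|\vect x_s-\vect y_j|^2/2(L+\eta_j)},
\end{equation*}
and then approximate $\kappa_l/(L+\eta_j)\approx\kappa_c/L$ in the quadratic Fresnel phase, since the relative error incurred is controlled by $|\kappa_l-\kappa_c|/\kappa_c$ and by $\eta_j/L$. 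With this, $G(\vx_s,\vy_j;\omega_l)\approx \wh_r$-type factor times $e^{i\kappa_c|\vect x_s-\vect y_j|^2/2L}$; absorbing the $e^{i\kappa_l(L+\eta_j)}$ phase into the definition of $\vect\wh(\vy_j;\omega_l)=e^{i\kappa_l(L+\eta_j)}\vect\wg(\vy_j;\omega_l)$ as in the statement, the key point is that the only $\vx_s$-dependent factor is the scalar Fresnel phase $e^{i\kappa_c(\vect x_s-\vect y_j)^2/2L}$, which depends on $j$ but, crucially, not on $\omega_l$ nor on the receiver index $r$.

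Next I would assemble the matrix. Writing $\vect b_q$ for the $q$th column of $\Pm^c$ (so $\vx_q$ is the $q$th source location and the rows run over all receivers $\vx_r$ and all frequencies $\omega_l$), formula (\ref{response}) gives
\begin{equation*}
(\vect b_q)_{(r,l)}=\sum_{k=1}^K \rho_k\, G(\vx_r,\vy_k;\omega_l)\,G(\vy_k,\vx_q;\omega_l)
\approx \sum_{k=1}^K \rho_k\, \big[\wh_r(\vy_k;\omega_l)\big]\, e^{i\kappa_c(\vect x_q-\vect y_k)^2/2L},
\end{equation*}
where $\wh_r(\vy_k;\omega_l)$ is the $r$th component of $\vect\wh(\vy_k;\omega_l)$ (the reciprocity $G(\vy_k,\vx_q;\omega_l)\approx e^{i\kappa_l(L+\eta_k)}/(4\pi L)$ carries no transverse Fresnel phase at the source because $\vx_q$ contributes only to the separated scalar phase once we also use $|\vx_r-\vy_k|\approx L+\eta_k$ in the amplitude). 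This is exactly the $(r,l)$-entry of $\tAc\,\Lambda_q\,\vect\rho$ with $\tAc$ given by (\ref{eq:matrixSfreqhalfapprox}) and $\Lambda_q$ given by (\ref{eq:matrixSfreqdiag}), since the $k$th diagonal entry of $\Lambda_q$ is precisely $e^{i\kappa_c(\vect x_q-\vect y_k)^2/2L}$ and the $k$th column of $\tAc$ stacks $\vect\wh(\vy_k;\omega_l)$ over $l=1,\dots,S$. Hence $\tAc_q\bfrho=\vect b_q$ with $\tAc_q\approx\tAc\Lambda_q$.

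Finally I would track the error. There are two sources: (i) truncating the phase of the Green's function at the quadratic term, which discards a cubic term of size $\kappa_l|\vect x-\vect y|^3/L^2 = O(\omega_c a^3/c_0 L^2)$ in the phase, but when one also accounts for the variation over the frequency band of width $B$ the dominant surviving mismatch between the true bilinear phase and the factored form $\tAc\Lambda_q$ is $O\!\big(\tfrac{B a^2}{c_0 L}\big)$; and (ii) replacing $\kappa_l/(L+\eta_j)$ by $\kappa_c/L$ in the Fresnel phase and the amplitude, whose leading contribution is $O\!\big(\tfrac{\omega_c a^4}{c_0 L^3}\big)$ after using $|\vect x-\vect y|^2=O(a^2)$ and $\eta_j=O(a)$. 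Summing these gives the claimed bound $O\!\big(\tfrac{B a^2}{c_0 L}+\tfrac{\omega_c a^4}{c_0 L^3}\big)$ on the phase error, hence on $\|\tAc_q-\tAc\Lambda_q\|$ to leading order. The main obstacle is the bookkeeping in step (iii): one must be careful to separate which cross terms in the expansion of $|\vx_r-\vy_k|+|\vx_s-\vy_k|$ genuinely factor into a source-only diagonal versus which do not, and to confirm that the non-factoring remainder is indeed of the stated order uniformly over $r,l,k$ and over the source index $q$ — in particular checking that the $\omega_l$-dependent piece of the Fresnel phase that cannot be absorbed into $\vect\wh$ is small precisely because the bandwidth is narrow relative to $c_0L/a^2$.
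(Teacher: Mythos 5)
Your proposal follows essentially the same route as the paper's (very brief) proof: apply the Fresnel/paraxial expansion to the source-side Green's function $G(\vy_j,\vx_q;\omega_l)$, absorb the leading phase $e^{i\kappa_l(L+\eta_j)}$ into the columns $\vect\wh(\vy_j;\omega_l)=e^{i\kappa_l(L+\eta_j)}\vect\wg(\vy_j;\omega_l)$, and observe that the remaining quadratic phase $e^{i\kappa_c(\vect x_q-\vect y_j)^2/2L}$ depends on neither the frequency nor the receiver index, so it factors out as the diagonal $\Lambda_q$. The only quibble is in your error bookkeeping: the expansion of $|\vx_q-\vy_j|$ in the transverse offset contains no cubic term (the first correction beyond the Fresnel phase is quartic, which is what produces the $\omega_c a^4/(c_0L^3)$ contribution), while the $Ba^2/(c_0L)$ term comes from replacing $\kappa_l$ by $\kappa_c$ in the quadratic phase — this mislabeling does not affect the conclusion, and the paper itself states the bound without derivation.
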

\begin{proof} The proof of Lemma \ref{lemma_parax} is straightforward. We only outline the idea here. Assume we use an illumination ${\bf e}_q$, then the $j$th column of $\Ac_q$
is
\begin{equation}\label{eq:matrixSfreq}
\left( 
\begin{array}{c}
   \uparrow \\
     G (\vy_j,\vx_q; \omega_1) \vect\wg(\vy_j; \omega_1)    \\
\downarrow \\
  \uparrow \\
      G (\vy_j,\vx_q; \omega_2) \vect\wg(\vy_j; \omega_2)    \\
 \downarrow \\
 \vdots \\
 \uparrow \\
     G (\vy_j,\vx_q; \omega_S)  \vect\wg(\vy_j; \omega_{S})    \\
 \downarrow
\end{array} \right) \, ,
 \end{equation}
where $ G (\vy_j,\vx_q; \omega_l)$ is~(\ref{greenfunc}). Thus, if $L$ is much larger than $a$ and the IW is small
\[
G (\vy_j,\vx_q; \omega_l) = \dsp \frac{e^{\rmi\kappa_l|\vx_s-\vy_j|}}{4\pi|\vx_q-\vy_j|} \approx \frac{1}{ 4 \pi L} e^{ \rmi\kappa_l |\vx_q-\vy_j|}=
e^{i\kappa_l (L+\eta_j)}e^{i(\varphi + \tilde{\varphi})}\, ,
\]
with $\varphi=\kappa_c(\vect x_q-\vect y_j)^2/2L$ and $\tilde{\varphi}=O\left( \frac{B a^2}{c_0 L} + \frac{\omega_c a^4}{c_0 L^3} \right)$.
\end{proof}

Similar considerations imply that the factorization~(\ref{factor1}) works if illuminations satisfy $\vect \wf(\omega_l) = f(\omega_l) \vect \wf$. 
This means that the array uses the same illumination pattern $\vect \wf$  for all the frequencies. We do not discuss this case for simplicity of presentation.

It is natural to ask whether other approaches may be more fruitful. After all, we obtain only approximate MUSIC so perhaps one could have used instead an alternative data structure and obtain an exact MUSIC.
In our previous work~\cite{Moscoso17} we  tried to use
\begin{equation}\label{PDmultiple}
B=\Pm^d = 
 \left( 
\begin{array}{cccc}
 \Pm(\omega_1)& \ldots & 0& 0\\
0&  \Pm(\omega_2) & \ldots& 0\\
\ldots & \ldots &\dots &\dots\\
0&0&0&\Pm(\omega_S) \\
 \end{array} \right)\, 
 \end{equation}
to image with MUSIC.  We showed that imaging with such data structure is equivalent to imaging with
each frequency separately and summing up the resulting images incoherently. Therefore there is no significant
improvement over imaging with a single frequency if one uses (\ref{PDmultiple}) for imaging with MUSIC~\cite{Moscoso17}. 

\subsubsection{Imaging without phases}
\label{sec:mus_mr_mf_nophases}

In its classical form, the phase retrieval problem consists in finding a function from
the amplitude of its Fourier transform. In imaging, it consists in finding a vector $\bfrho$ that is compatible with a set of quadratic equations for measured amplitudes. This occurs in imaging regimes where only intensity data is recorded and, thus, most of the information encoded in the phases is lost.
Phase retrieval algorithms have been developed over a long time to deal with this problem \cite{GS72,Fienup82}. 
They are flexible and effective but depend on prior information about the image and can give uneven results. 
An alternative convex approach that guarantees exact recovery has been considered in \cite{CMP11,Candes13}, but its computational cost is extremely high 
when the problem is large. When, however, we control the illuminations we may recover the missing phase information 
using a completely different strategy. This strategy  was introduced in~\cite{Novikov14,Moscoso16,Moscoso17}. We explain here some of its aspects that are relevant to this work. 

Assume that only the intensities can be recorded at the array. In \ref{sec:phaseR} we show that, for a fixed receiver location,
we could recover single frequency cross correlated data from multiple intensity-only measurements.
On the other hand, as noted in \cite{Novikov14}, the support of the reflectivity $\bfrho$ can be recovered exactly by 
using the MUSIC algorithm on the single frequency  interferometric matrix $\Mm(\omega)=\Pm^*(\omega)\Pm(\omega)$ if the data are recorded at
several receivers.
For multiple frequencies, multiple sources and multiple receivers one can use the  data structure
\begin{equation}\label{Mmultiple}
B = \Mm^{c}:=
 \left( 
\begin{array}{c}
\Pm(\omega_1)^* \Pm(\omega_1)\\
\Pm(\omega_2)^* \Pm(\omega_1)\\
\vdots \\
\Pm(\omega_S)^*\Pm(\omega_1) 
 \end{array} \right)\, 
 \end{equation}
for pairs of frequencies $(\omega_l,\omega_1)$, $l=1,\dots,S$, to image coherently using MUSIC. Indeed, the matrix
$\Mm^{c}$ in (\ref{Mmultiple}) and the matrix $\Pm^{c}$  in (\ref{Pstack}) have the same column space
and, therefore, MUSIC can  form the images using  the SVD of $\Mm^{c}$ and the column vectors of matrix (\ref{eq:matrixSfreqhalfapprox}) as imaging vectors. 
We denote this data structure with the superscript $c$ to point out that we have stacked the one frequency
matrices $\Pm(\omega_l)$ and the two frequencies matrices $\Pm(\omega_l)^* \Pm(\omega_1)$ 
in a column.

\section{Numerical Simulations}
\label{sec:numerics}
We present here numerical simulations that illustrate the performance of MUSIC. 
The data are simulated using the model in (\ref{responsematrix}) with $\wG(\vx,\vy;\omega)$ as in (\ref{greenfunc}).
We first illustrate the relevance of Theorem \ref{th:musicnoise}  for active array imaging in the presence of noise, and then we discuss multifrequency imaging with phaseless data as it was explained in Subsection \ref{sec:mus_mr_mf_nophases}. 

\subsection{Imaging results in the framework of Theorem \ref{th:musicnoise}}
To study the robustness of MUSIC with respect to additive noise we consider in this section active array imaging with multiple sources and multiple receivers, but a single frequency; see subsection \ref{sec:sfmr}. Given a set of illuminations $\{\vect \wf_{q}(\omega) \}_{q=1,\dots,\aleph}$, the imaging problem is to determine the location and reflectivities of the scatterers from a data matrix $\Bm$ whose column vectors are given by (\ref{dataf}), including phases.
This problem admits an exact factorization of the form (\ref{factor}) and, therefore, MUSIC can be used for recovering the support of the solution. Furthermore, MUSIC provides the exact support of the reflectivity under the assumptions of Proposition \ref{prop1}. 

According to Theorem \ref{th:musicnoise}  the effectiveness of the illuminations can be characterized by  $\smT$ defined in (\ref{ortho_cond}). This parameter quantifies how well the support of the reflectivity is illuminated and, thus, it affects the robustness of the MUSIC results. Specifically, from (\ref{close}) the distance between the orthogonal projections onto the perturbed and unperturbed signal subspaces is inversely proportional to $\smT$ and, thus, a good set of illuminations is one for which $\smT$ is large. 

It was observed in \cite{CMP13, CMP14} that imaging using the  top  singular  vectors  of the data matrix as  illuminations
lowers  the  impact  of  the noise  in  the  data. These  illumination  vectors  are  optimal in  the  sense  that  they  result  in  array  data  
with  maximal  power,  which  is  proportional  to  the associated  singular  values. They can be computed systematically  from the  singular  value  decomposition  of  the  array  response  matrix (\ref{responsematrix}) if it is available,  or  with  an  iterative time  reversal  process, which is  a  very  efficient  acquisition  method  for  obtaining  the  essential part  of  the  array  response  matrix  as discussed in 
\cite{Prada95}.

It is easy to understand Theorem \ref{th:musicnoise}  when the  scatterers are well separated, meaning that the Green's function vectors 
$\vect \wg(\vy;\omega)$ evaluated on the support of the solution are approximately orthogonal.  Indeed, in this limit, the  top singular  
vectors  correspond  one-to-one  to  the  scatterers. Then, it follows that $\smT$ is optimal and close to $\| \vect\wg(\vz_j; \omega)\|^2$
evaluated at the  weakest  scatterer.

We plot in Figure \ref{fig:h4} the images obtained with MUSIC using different set of illuminations. The value of $\smT$ that corresponds to each set of illuminations is displayed above the images. The images are obtained in a homogeneous medium using an active array of $N = 81$ transducers that transmit and receive the signals. The frequency used is $600$ THz, corresponding to a wavelength $\lambda$ of $500$ nm (blue light). The array size is $100 \lambda$ and the distance from the array to the IW is $L=100\lambda$ as well. The IW is a rectangle  of size $5\lambda \times 50 \lambda$ discretized with a regular mesh of $50\times50$ rectangular elements. Different sets of illuminations are used to gather the data matrix $\Bm$. 
In all the figures, the true locations of the scatterers are indicated with white crosses, and the length scales are 
measured in units of  $\lambda_0$. In this numerical experiment, the scatterers are on the grid.
We add to the data mean zero uncorrelated noise corresponding to SNR~$=0$ dB.

The left most image of Figure \ref{fig:h4} shows the results obtained with MUSIC using optimal illuminations. We observe that MUSIC is very robust with respect to additive noise. The other three images are obtained with random illuminations: from top to bottom and from left to right the value of $\smT$ decreases. As expected from Theorem \ref{th:musicnoise} , the results are only good for
sets of illuminations with large $\smT$. Observe that MUSIC misses several scatterers in the two images in the bottom row of Figure \ref{fig:h4} corresponding to small $\smT$ values.

\begin{figure}[htbp]
\begin{center}
\begin{tabular}{cc}
$\smT=0.22$ & $\smT=0.16$ \\
\includegraphics[scale=0.28]{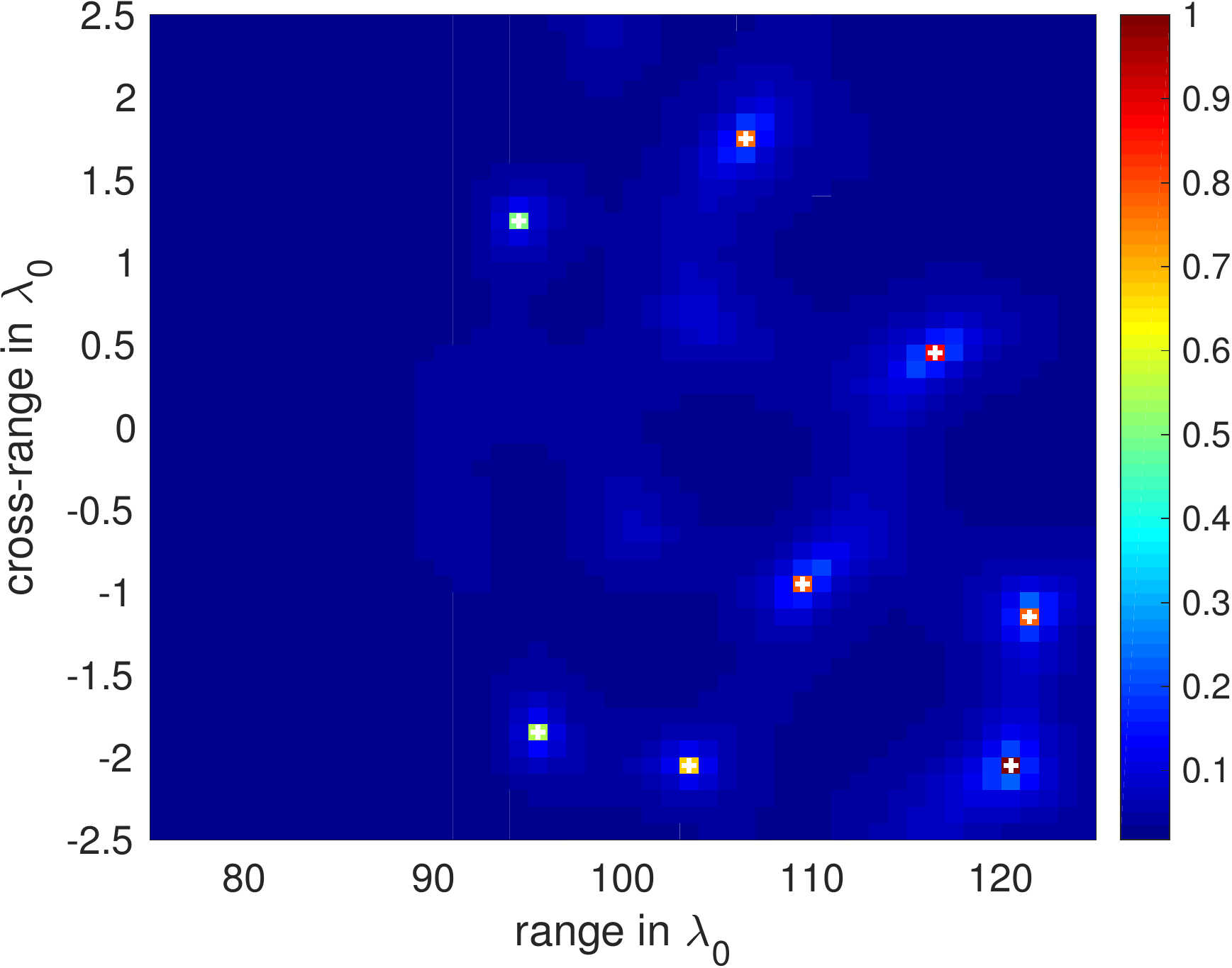}&
\includegraphics[scale=0.28]{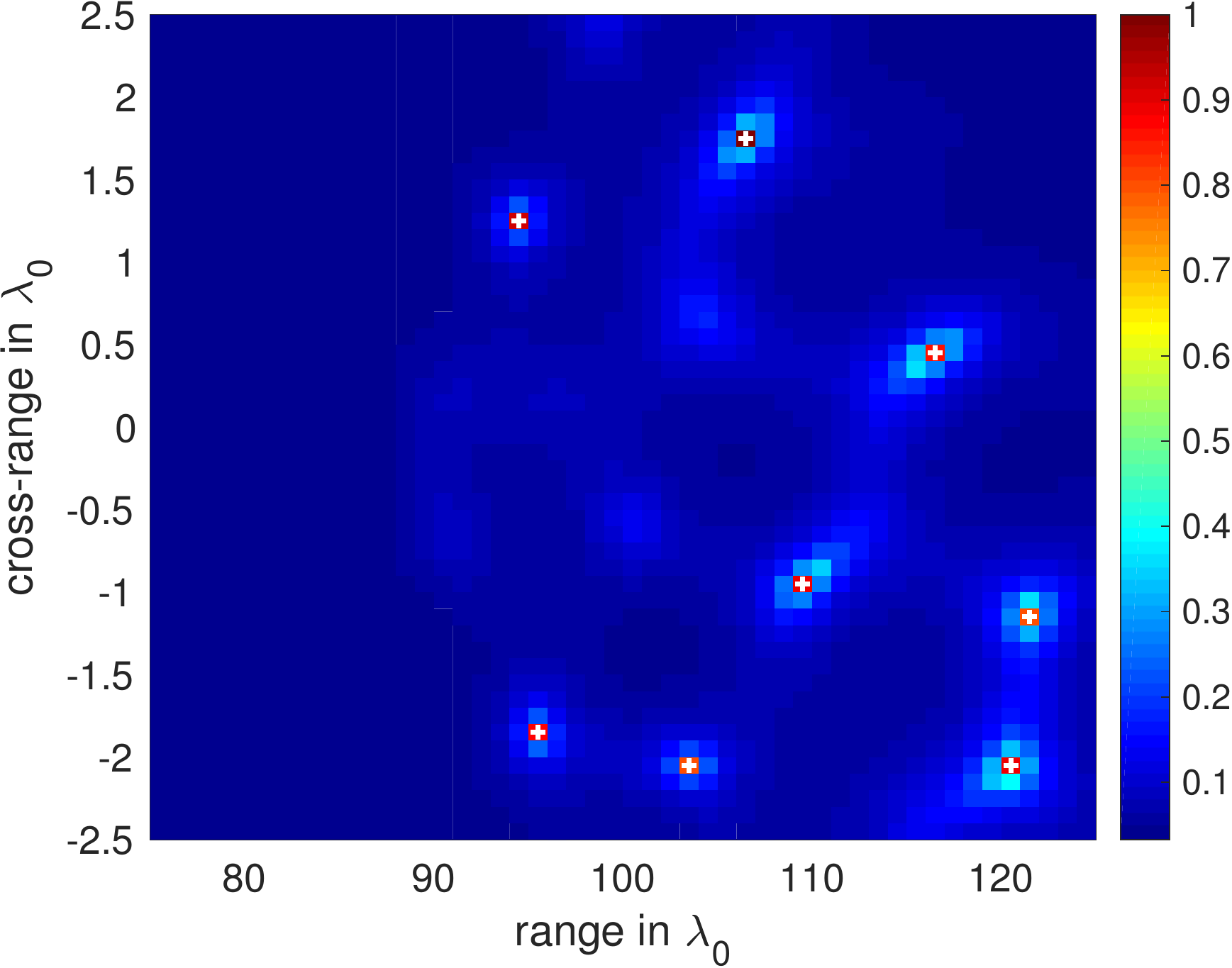}\\
$\smT=0.065$ & $\smT=0.064$\\
\includegraphics[scale=0.28]{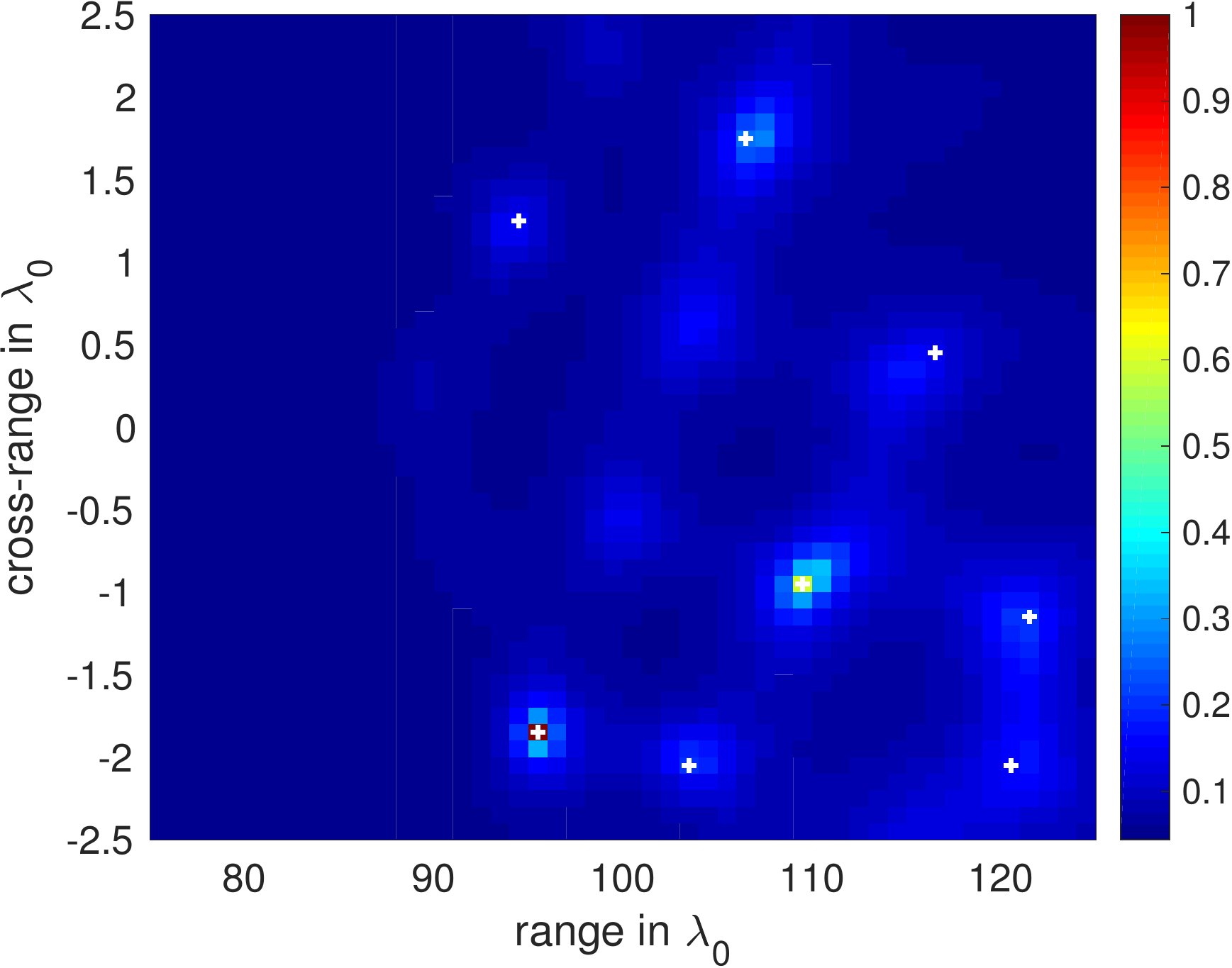} &
\includegraphics[scale=0.28]{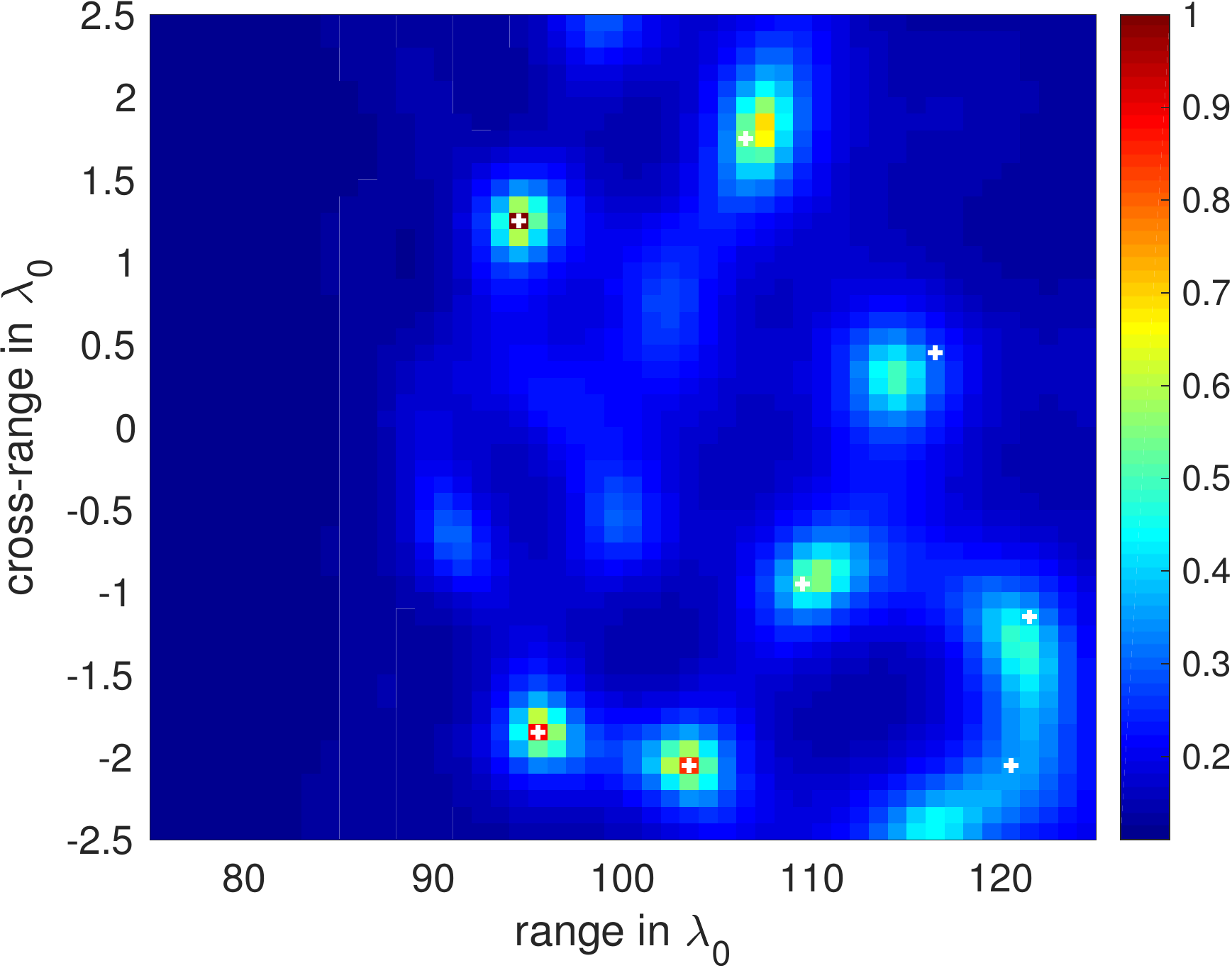} 
\end{tabular}
\end{center}
\caption{Imaging results using MUSIC with multiple sources and multiple receivers, but a single frequency.  SNR~$=0$dB corresponding to additive noise. 
The scatterers are on the grid. The top left  image is obtained using the optimal illuminations, for which $\smT=0.22$. The other three images are obtained using $12$ randomly chosen illuminations, for which the values of $\smT$ vary.
}
\label{fig:h4}
\end{figure}

\subsection{Multifrequency phaseless imaging}
\label{sec:data}
Next, we consider imaging with multiple sources, multiple receivers, and multiple frequencies, but phaseless data; see subsection \ref{sec:mus_mr_mf_nophases}. This case does not admit an exact factorization of the form (\ref{factor}) and, therefore, MUSIC does not provide the exact support of the solution. Still, it can  be used to estimate the support in the paraxial regime, when the scatterers are very far from the array and the IW  is small. Next, we examine numerically the deterioration of the resolution provided by MUSIC as the IW gets closer to the array.

We consider a central frequency $f_0=600$THz, typically used in optics, corresponding to a central wavelength $\lambda_0=500$nm.   
We use $S=12$  equally spaced frequencies covering a total bandwidth of $30$THz.  
All considered wavelengths are in the visible spectrum of green light.  
The size of the  array is $a=500\lambda_0$, and the distance between the array and the IW is $L=10000\lambda_0$. 
The medium between the array and the IW is homogeneous. The IW, whose size is $100 \lambda_0 \times 100 \lambda_0$,  is discretized
using a uniform lattice with mesh size $2\lambda_0\times 2\lambda_0$. Thus, the unknown image has $51\times51$ pixels.   
For this imaging system, we expect the cross-range and range resolutions to be of the order of $\lambda_0 L/a=20 \lambda_0$ and $C_0/B = \lambda_0 f_0/B=20\lambda_0$, respectively. 
In this setup, the propagation distance $L$ is large, and the array and the IW sizes are small so that the paraxial approximation holds. 

We assume that the phases of the signals received at the array cannot be measured. Hence, only their intensities are available for imaging. 
These measurements
are collected at multiple receivers, so we use the methods explained in subsection \ref{sec:mus_mr_mf_nophases} to image interferometrically.
\begin{figure}[htbp]
\begin{center}
\begin{tabular}{cc}
\includegraphics[scale=0.28]{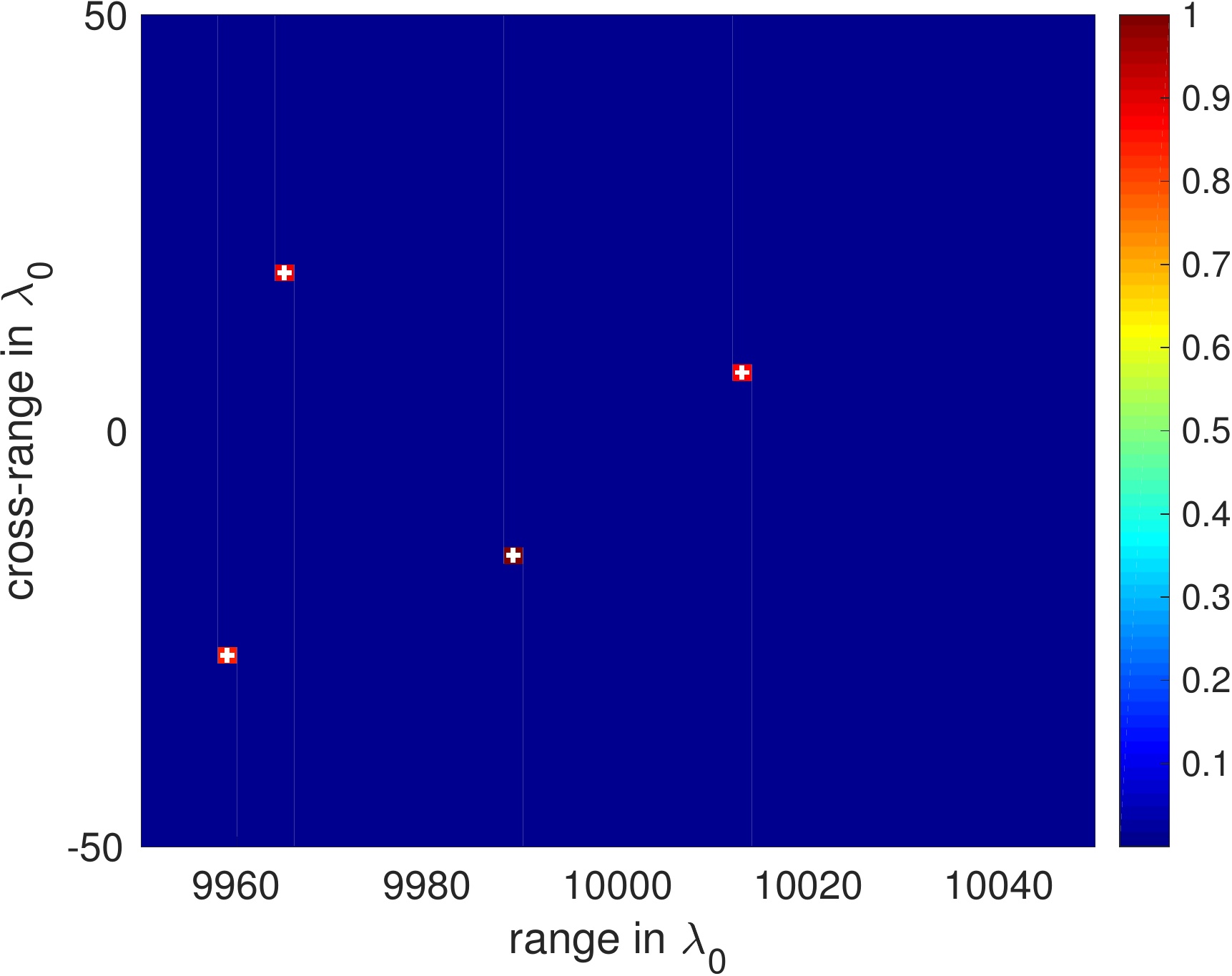}&
\includegraphics[scale=0.28]{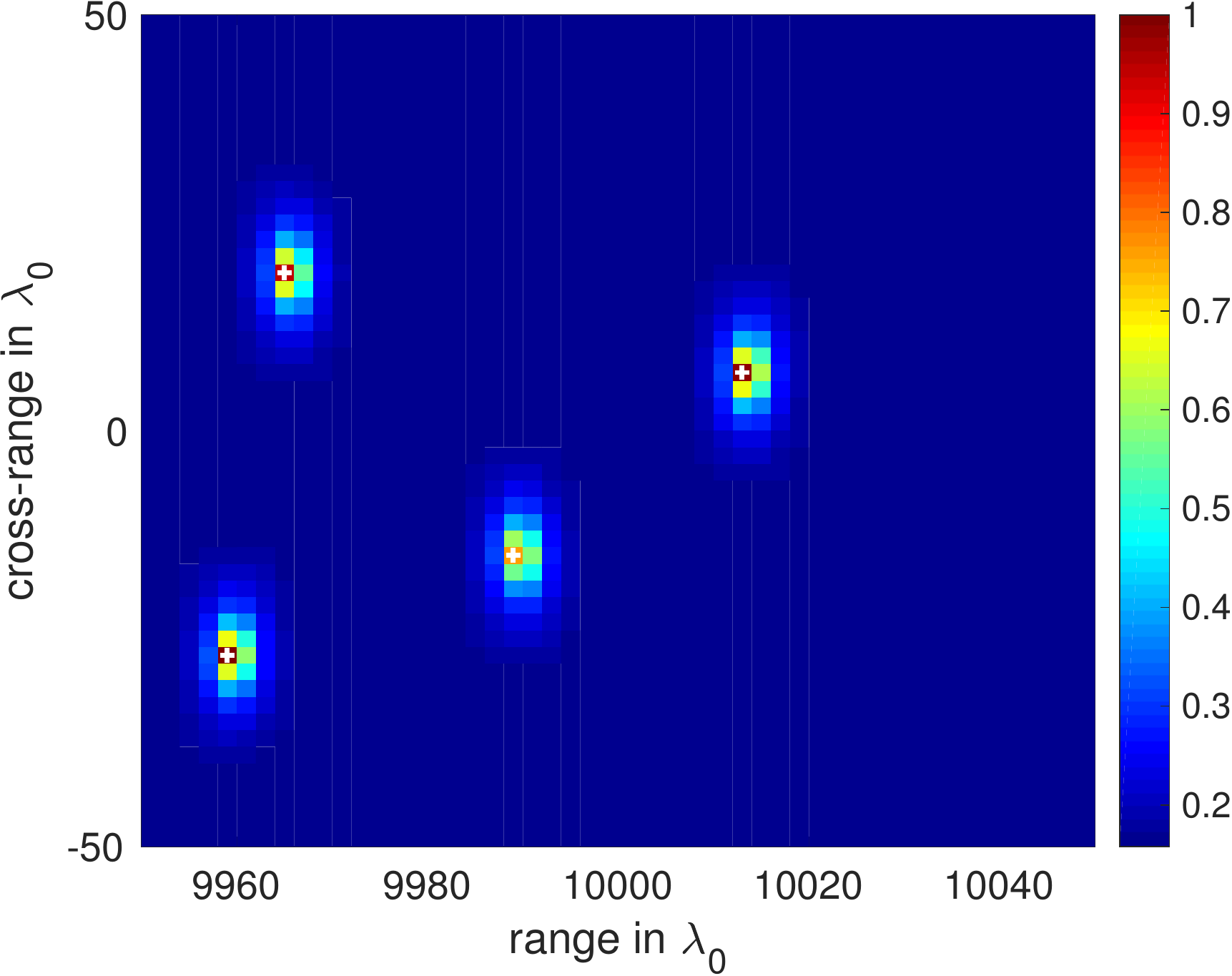}
\end{tabular}
\end{center}
\caption{There is no noise added to the data and the scatterers are on the grid. The left panel is the image constructed using 
MUSIC with $\Mm^d$. The right panel is obtained using MUSIC with $\Mm^{c}$ that couples the data over frequencies.}
\label{fig:h1}
\end{figure}

In Figure \ref{fig:h1}, the scatterers lie on the grid and no noise is added to the data. Hence, the data are exact.
We observe that imaging with MUSIC using the block-diagonal matrix $\Mm^d$ (left image)  gives exact recovery, while
MUSIC using the  $\Mm^{c}$ matrix (right image) that  couples all the frequencies is less accurate. This is so because, as we explained in Section \ref{sec:mus_mr_mf}, MUSIC with  $\Mm^{c}$  is not exact as it only provides,  in the paraxial regime, approximate locations of the scatterers. 

Figure~\ref{fig:h2} shows the same experiment as Figure~\ref{fig:h1}  but with off-grid scatterers. In this figure, the scatterers are displaced by half the grid size with respect to the grid points in both range and cross-range directions. 
This produces perturbations in the unknown phases of the signals collected at the array due to modeling errors. We remark that although the phases are not directly measured they are encoded in the  intensity measurements. We observe in Figure~\ref{fig:h2} that the image obtained with MUSIC using the $\Mm^d$ data 
structure  (left plot) deteriorates dramatically because the multiple-frequency information contained in the data is not processed in a coherent way. On the other hand, 
MUSIC with the $\Mm^{c}$ data structure (right plot) is very robust with respect to the off-grid displacements.

\begin{figure}[htbp]
\begin{center}
\begin{tabular}{cc}
\includegraphics[scale=0.28]{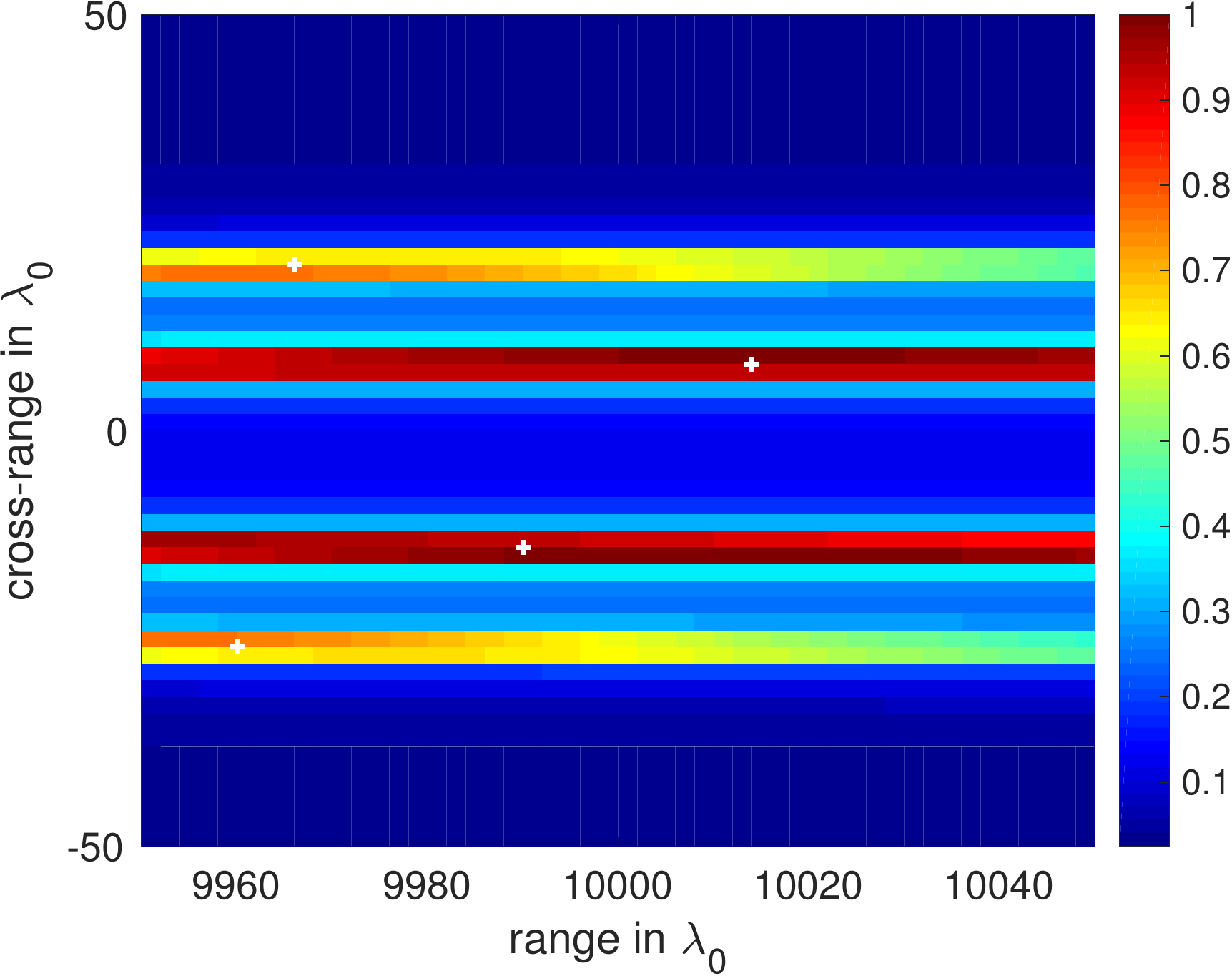}&
\includegraphics[scale=0.28]{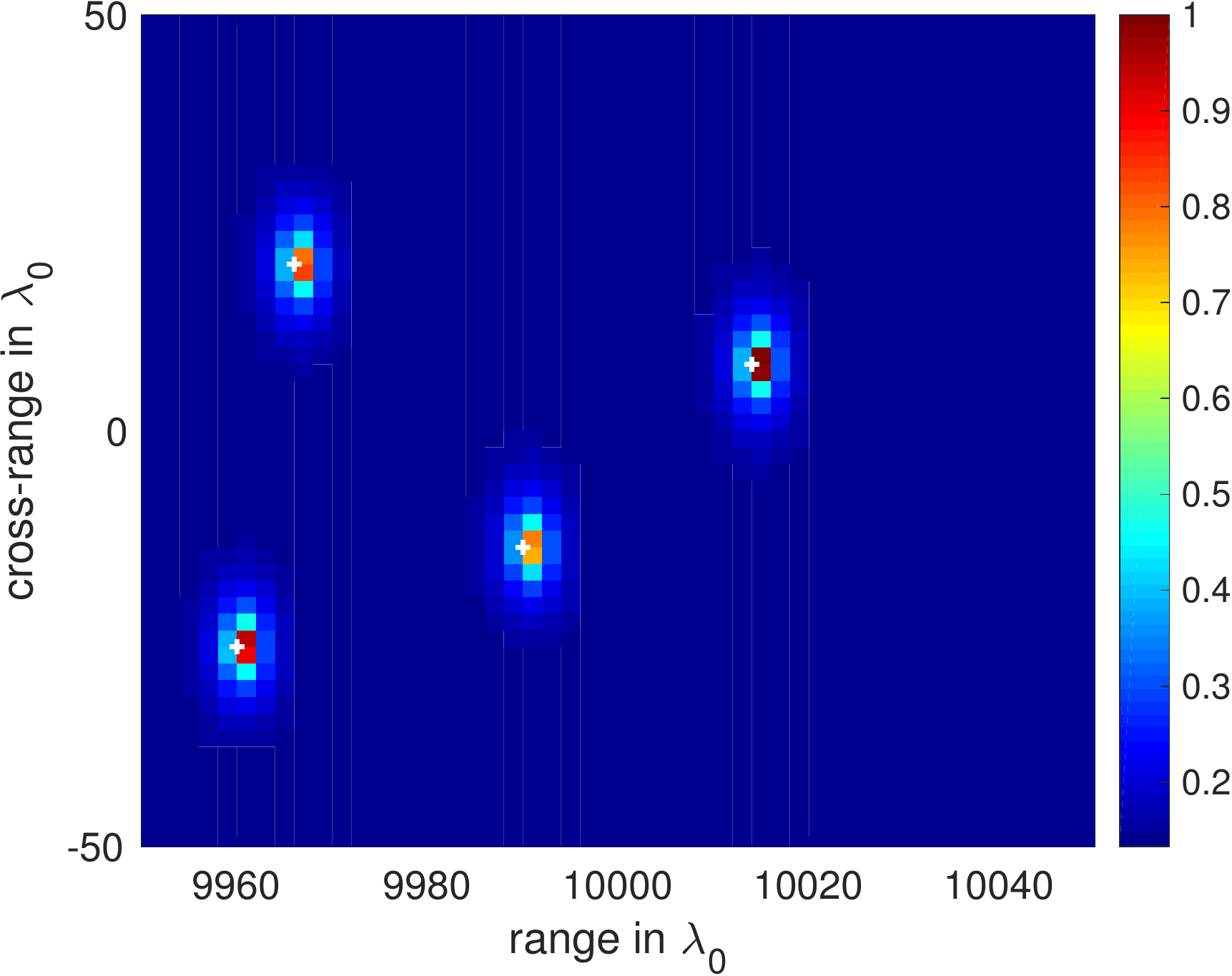}
\end{tabular}
\end{center}
\caption{Same as Figure~\ref{fig:h1} but with the scatterers off the grid. The scatterers are displaced by half the grid size in both directions from a grid point.}
\label{fig:h2}
\end{figure}

As noted above, multifrequency MUSIC using the matrix $\Mm^{c}$ is not exact. It only gives an approximation to the support of the scatterers  in the paraxial regime.  Thus, we expect  the resolution to improve (resp.  deteriorate) as the IW is moved  further (resp. closer) from the array. To examine its accuracy, we consider  in 
Figure \ref{fig:h3} imaging configurations with different ratios $a/L$.  We display from left to right the results for $a/L$ equal to $1/100$, $1/20$, $1/4$ and $1$. For a meaningful comparison, the mesh size in cross-range is adjusted so that it is always one tenth of the nominal resolution $\lambda_0 L/a$, i.e.,
the mesh size in cross-range is $\lambda_0 L/(10a)$ in all the images shown in Figure \ref{fig:h3}. In order words, the number of pixels in the images 
is kept constant
by changing the sizes of the IWs according to the relation $5 \lambda_0 L/a \times 5 (C_0/B)$. Thus, all the images in Figure \ref{fig:h3} have $51\times51$ pixels. As expected, the images in this figure show an almost exact recovery for small $a/L$ ratios and a worsening of the results as the ratio increases.

\begin{figure}[htbp]
\begin{center}
\begin{tabular}{cc}
$a/L=0.01$ & $a/L=0.05$ \\
\includegraphics[scale=0.28]{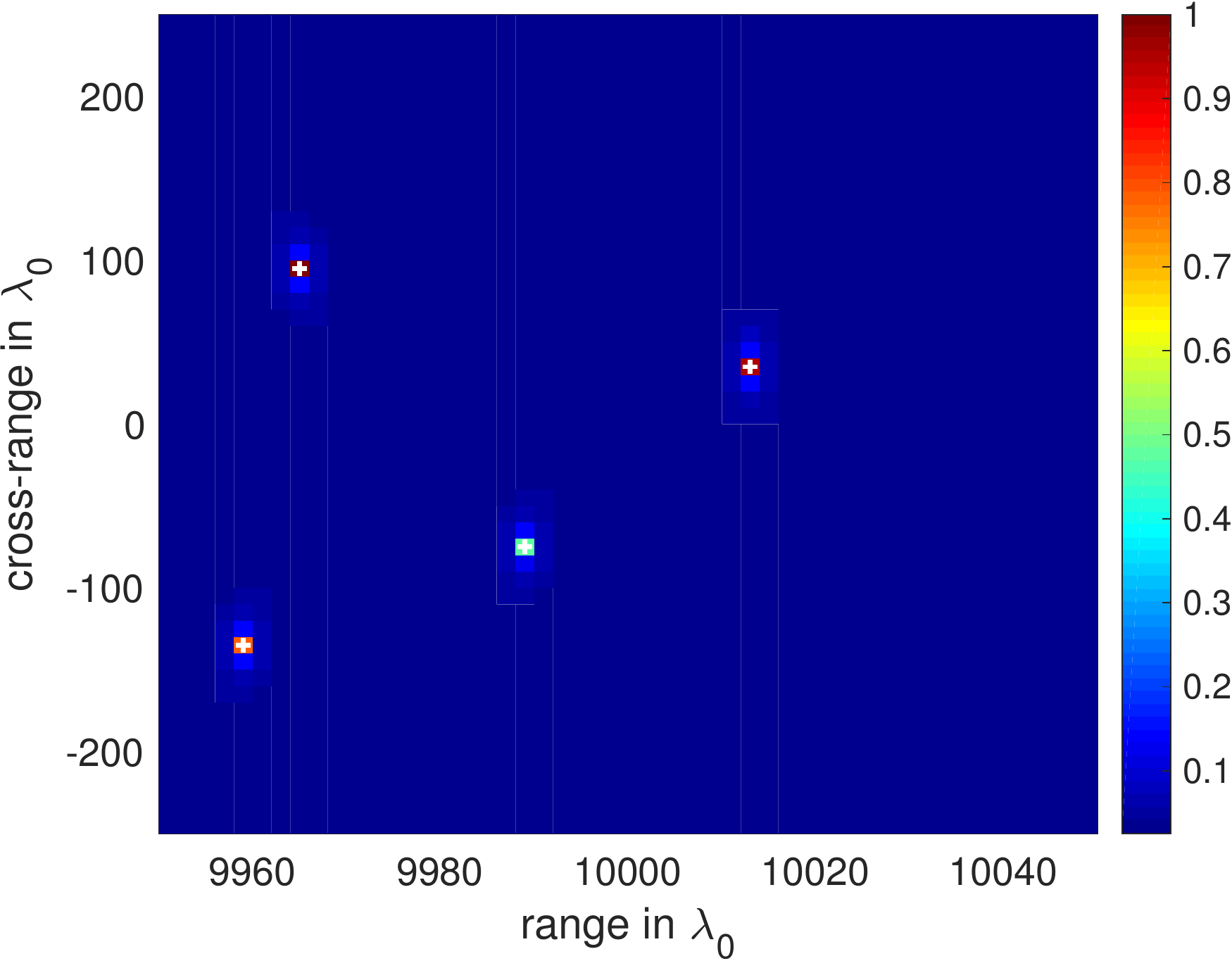}&
\includegraphics[scale=0.28]{IM_MUSIC_NEW_real1On} \\
$a/L=0.25$ & $a/L=1$\\
\includegraphics[scale=0.28]{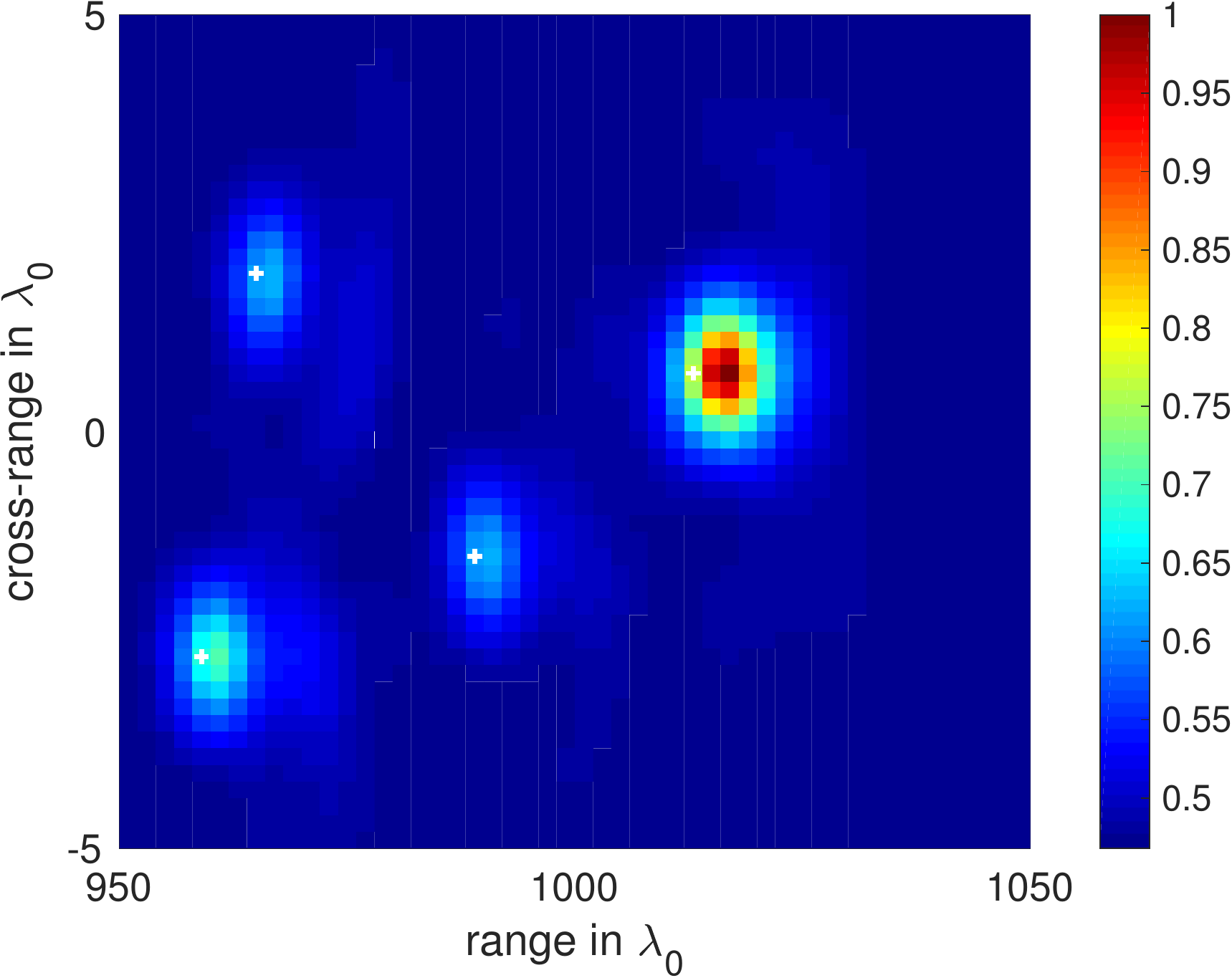} &
\includegraphics[scale=0.28]{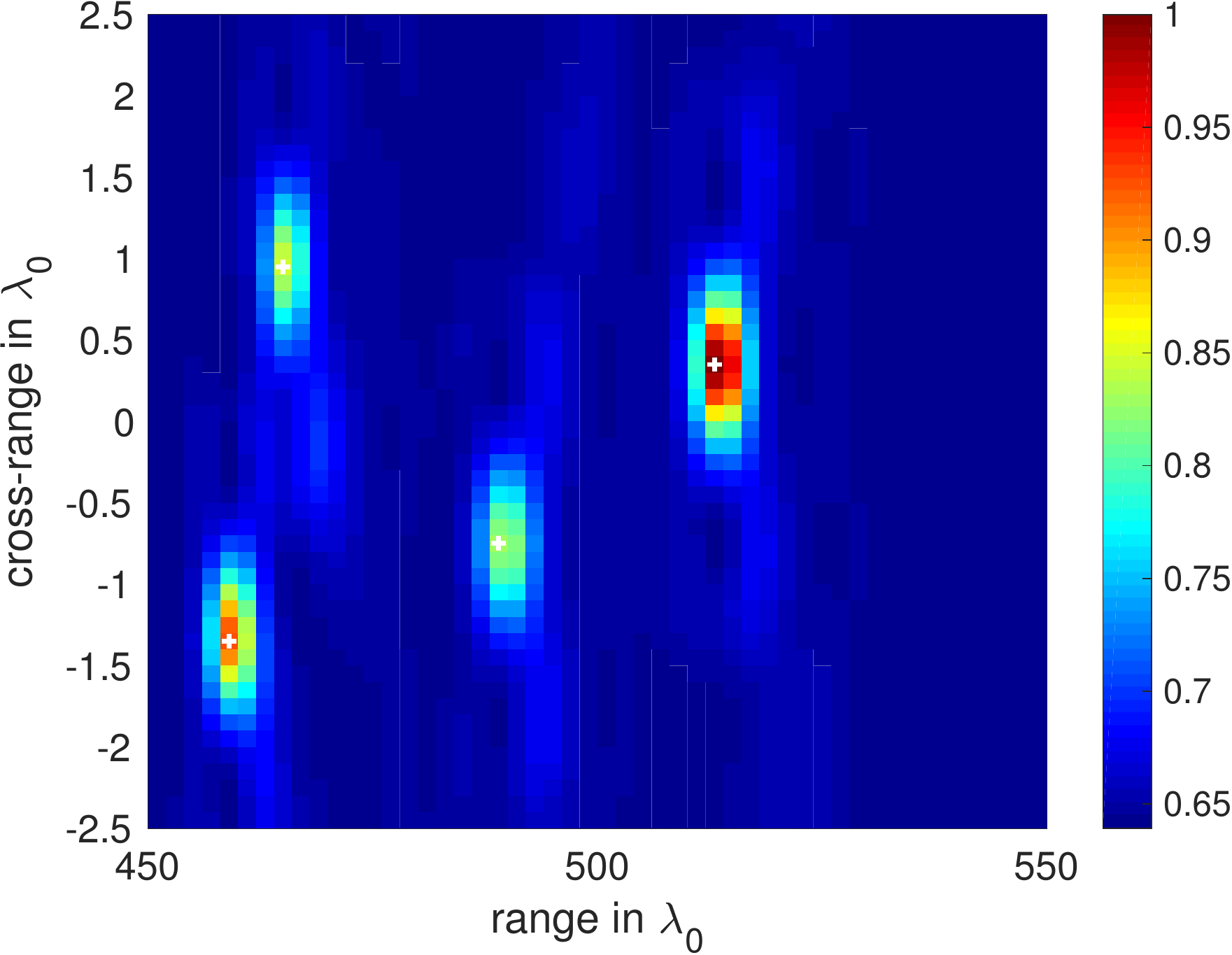} \\
\end{tabular}
\end{center}
\caption{Imaging results using MUSIC with $\Mm^{c}$ coupling over frequencies. From left to right and top to bottom the ratio $a/L$ increases and, therefore, the error due to the paraxial approximation increases so the accuracy of the MUSIC reconstruction decreases. The scatterers are on the grid.}
\label{fig:h3}
\end{figure}

\section{Conclusions} \label{sec:concl}
In this paper we discussed appropriate data structures that allow robust images with MUSIC, a method that is well adapted to finding sparse solutions of linear underdetermined systems of equations of the form 
$\Ac_{\wl_q}\bfrho=\bfb_{\wl_q}$.  In this work $\bfrho$ is the reflectivity, the image that we want to form, and $\vect \wl_q$ is a parameter vector that can be varied, such as the illumination profile of 
the imaging system 
in space and/or frequency. Given the data $\bfb_{\wl_q}$, our first main result  is the key observation that MUSIC provides the exact support of the unknown $\bfrho$ when the matrix $\Ac_{\wl_q}$ admits a factorization 
of the form $\Ac_{\wl_q}= \tAc \,\Lambda_{\wl_q}$ with $\Lambda_{\wl_q}$ diagonal. We also show in Theorem \ref{th:musicnoise} that MUSIC is robust with respect to noise provided the diversity of the data is high enough. Our second main contribution is 
 an approximate MUSIC algorithm for  multifrequency and multiple receiver imaging which is
obtained  under the paraxial approximation. Its robustness is illustrated with numerical simulations in an optical digital microscopy imaging regime.

\section*{Acknowledgments}
Part of this material is based upon work supported by the National Science Foundation under Grant No. DMS-1439786 while the authors were in 
residence at the Institute for Computational and Experimental Research in Mathematics (ICERM) in Providence, RI, during the Fall 2017 semester.
The work of M. Moscoso was partially supported by Spanish grant FIS2016-77892-R. The work of A.Novikov was partially supported by NSF grant DMS-1813943. The work of C. Tsogka was partially supported by AFOSR FA9550-17-1-0238. 

\section*{References}

\clearpage
\appendix

\section{Proof of theorem \ref{th:musicnoise}}
\label{app:proofmusic}

\begin{proof} 
 We claim that
\begin{equation}\label{yo_yo}
 (1-  { 2} \, \epsilon)^2 \| \vect  z \|_{\ell_2}^2 \leq \| ( \tAc^* \vect z)_T\|_{\ell_2}^2 \leq  (1+ { 2} \, \epsilon)^2 \| \vect  z \|_{\ell_2}^2
\end{equation}
if $\vect  z \in R(B)$ and $\eps<1/3$. Indeed, suppose that
\[
\vect  z= \sum_{i \in T} \alpha_i \vect a_i. 
\]
Then, defining $\vect \alpha$ 
as the vector in $\mC^K$ whose components are zero except the $i$th components with $i\in T$ that are equal to $\alpha_i$,
we get
\[
\left| \| \vect z\|_{\ell_2}^2-  \| \vect \alpha \|_{\ell_2}^2 \right|= \left|  \sum_{i, j \in T, i \neq j}  \bar{\alpha_i} \alpha_j \langle \vect a_i, \vect a_j \rangle \right| \leq \epsilon  \| \vect \alpha \|^2_{\ell_2}\, ,
\]
and
\[
  (1- \epsilon) \| \vect \alpha \|^2_{\ell_2} \leq \| z\|^2_{\ell_2} \leq  (1+ \epsilon) \| \vect \alpha \|^2_{\ell_2}.
\]
For any $j \in T$ we have
\[  
(\tAc^* \vect z)_j =  \sum_{i \in T} \alpha_i \langle \vect a_j, \vect a_i \rangle\, ,
\]
and, therefore,
\[
\| (\tAc^* \vect z)_T\|^2_{\ell_2} = 
 \sum_{ i,j,k \in T}  \bar{\alpha_j}  \alpha_i \langle \vect a_k, \vect a_i \rangle  \overline{\langle \vect a_k, \vect a_j \rangle}  \, .
\]
Hence,
\[
\left| \| ( \tAc^* \vect z)_T\|_{\ell_2}^2
-  \| \vect \alpha \|^2_{\ell_2} \right|  = 
 \left|   \sum_{ j, k \in T, j \neq k}   \left| \alpha_j \right|^2 ~ \left| \langle \vect a_k, \vect a_j \rangle \right|^2  +  \sum_{ i, j, k \in T, i\neq j}    \bar{\alpha}_j  \alpha_i \langle \vect a_k, \vect a_i \rangle  \overline{\langle \vect a_k, \vect a_j \rangle}   \right|
  \]
  \[
  \leq   \frac{\eps^2}{M-1} \| \vect \alpha \|^2_{\ell_2}+ \!\!\! 
  \sum_{ i, j \in T, i\neq j} \frac{|\alpha_j|^2 + |\alpha_i|^2}{2}  \left( \frac{2 \eps}{M-1}+ \frac{\eps^2 (M-2)}{(M-1)^2} \right)  \leq  (2 \eps + \eps^2) \| \vect \alpha \|^2_{\ell_2}.
\]
Therefore,
\[
 (1 -  2\, \epsilon - \eps^2) \| \vect \alpha \|^2_{\ell_2} \leq \| ( \tAc^* \vect z)_T\|^2_{\ell_2} \leq  (1+  \epsilon)^2 \|\vect  \alpha \|^2_{\ell_2}, 
\]
and we obtain
\[
 \frac{1-  2\, \epsilon - \eps^2}{1 + \eps} \| \vect z \|^2_{\ell_2} \leq \| (\tAc^* \vect z)_T\|^2_{\ell_2} \leq  \frac{(1+  \epsilon)^2}{1 - \eps} \| \vect z \|_{\ell_2}^2, 
\]
which implies~(\ref{yo_yo}) if $\eps <1/3$\footnote{{ This is an overestimate. It suffices to have $\eps - \eps^2 - 4 \eps^3 >0$. }}. 

In order to compute the smallest nonzero singular value of $\Bm$ we observe that
\[
\min_{\vect z \in R(B), ||\vect z||_{\ell_2} =1} \vect z^* B B^* \vect z 
=\min_{\vect z \in R(B), ||\vect z||_{\ell_2} =1}  ( \tAc^* \vect z)^*_T X_T L_T  L^*_T \bar{X}_T ( \tAc^* \vect z)_T \]
\[
\geq (1-{ 2} \epsilon)^2  \min_{\vect y \in \mathbb{C}^M ||\vect y||_{\ell_2} =1}  \vect y^* X_T L_T  L^*_T \bar{X}_T \vect y
 \geq (1-{ 2} \epsilon)^2 { \mu^2} (\smT)^2\, ,
\]
where we have used 
{
that $\smT$ is the smallest singular value of $L$. 
}
Since 
$\sigma_{\max} (B^\delta- B)  \leq \delta$, we conclude that $\Bm^\delta=Q^\delta+Q^\delta_0$, where $Q^\delta$ has $M$ nonzero singular values,
with smallest nonzero singular value
\[
\sigma_{\min}( Q^\delta) \geq \mu \smT (1- 2 \epsilon) - \delta\, ,
\]
and $Q^\delta_0$  has largest singular value
$\sigma_{\max}( Q^\delta_0) \leq \delta$.
If $ 2 \delta < \mu  \smT (1- 2 \epsilon)$, then we can discard $Q^\delta_0$ by truncation of the singular values smaller than the noise level. 
We now apply Wedin Theorem \cite{Wedin72} (see Theorem~\ref{wed}  below)
{
to obtain
\[
\|P_{R(Q^\delta)} - P_{R(B)}\|_{\ell_2} \leq\frac{\delta}{\mu \smT (1- 2 \epsilon)}.
\]
}
\end{proof}

\begin{theorem}\label{wed} (Wedin)
{
Let $\Bm=Q+Q_0$, where $Q$  has the SVD  $Q=U\Sigma V^{\intercal}$, and consider the perturbed matrix $\Bm^\delta=B + E$. 
If there exists a decomposition $\Bm^\delta= Q^\delta+Q^\delta_0$, and 
 two constants $\alpha\ge 0$ and $\beta > 0$ such that largest singular value $\sigma_{max}(Q_0)\le\alpha$ and smallest singular value $\sigma_{min}(Q^{\delta})\ge\alpha+\beta$, }
then the distance between  the orthogonal projections onto the subspaces  $R(Q)$ and $R(Q^\delta)$ is bounded by
\begin{equation}
\|P_{R(Q^\delta)} - P_{R(Q)}\|_{\ell_2} \leq\frac{\delta}{\beta}\, ,
\end{equation}
where $\delta = \max(\|EV\|_{\ell_2}, \|E^*U\|_{\ell_2})$. 
\end{theorem}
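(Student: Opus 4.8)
The statement is the classical Wedin $\sin\Theta$ subspace--perturbation bound, so the plan is to reproduce the standard argument, specialized to the gap hypotheses stated here. First I would reduce the left--hand side to a single principal--angle ``sine''. Writing $P=P_{R(Q)}=UU^{*}$ and $P^{\delta}=P_{R(Q^{\delta})}=U^{\delta}(U^{\delta})^{*}$, where $U^{\delta}$ collects the left singular vectors of $Q^{\delta}$, and completing $U$ to a unitary matrix $[\,U\ U_{\perp}\,]$, I would use the elementary identity $\|P^{\delta}-P\|_{\ell_2}=\max\{\|(I-P)P^{\delta}\|_{\ell_2},\,\|(I-P^{\delta})P\|_{\ell_2}\}$ together with $\|(I-P)P^{\delta}\|_{\ell_2}=\|U_{\perp}^{*}U^{\delta}\|_{\ell_2}$. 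Because $R(Q)$ and $R(Q^{\delta})$ both have dimension $M$, the two terms in the maximum agree, so it is enough to bound the left angle matrix $G:=U_{\perp}^{*}U^{\delta}$.

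Next I would convert the singular--value relations into residual identities. It is natural here to read the splitting $B=Q+Q_{0}$ as the orthogonal one, $Q_{0}=U_{\perp}\Sigma_{0}V_{\perp}^{*}$ with $\|\Sigma_{0}\|_{\ell_2}=\sigma_{\max}(Q_{0})\le\alpha$ (in the application $Q_{0}=0$, $\alpha=0$). Projecting the SVD identity $B^{\delta}V^{\delta}=U^{\delta}\Sigma^{\delta}$ onto $U_{\perp}$, substituting $B^{\delta}=B+E$, and using $U_{\perp}^{*}Q=0$, I obtain $G\,\Sigma^{\delta}-\Sigma_{0}H=U_{\perp}^{*}E\,V^{\delta}$, where $H:=V_{\perp}^{*}V^{\delta}$ is the companion right--angle matrix. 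Transposing the whole computation gives the partner equation $H\,\Sigma^{\delta}-\Sigma_{0}G=V_{\perp}^{*}E^{*}U^{\delta}$. The two right--hand sides are controlled by the residual, since $\|U_{\perp}^{*}EV^{\delta}\|_{\ell_2}\le\|E\|_{\ell_2}$ and $\|V_{\perp}^{*}E^{*}U^{\delta}\|_{\ell_2}\le\|E\|_{\ell_2}$, which is where the two pieces $\|EV\|$ and $\|E^{*}U\|$ of $\delta$ enter and why the error in the left subspace is unavoidably coupled to the right one.

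The crux is to solve this coupled pair for $G$ using only the gap. Stacking the two equations and setting $\mathcal{S}_{0}=\left(\begin{smallmatrix}0&\Sigma_{0}\\ \Sigma_{0}&0\end{smallmatrix}\right)$, they collapse to a single Sylvester equation $\mathbf{G}\,\Sigma^{\delta}-\mathcal{S}_{0}\mathbf{G}=\mathbf{C}$ for $\mathbf{G}=\left(\begin{smallmatrix}G\\ H\end{smallmatrix}\right)$. The hypotheses give $\Sigma^{\delta}\succeq(\alpha+\beta)I$, while $\mathcal{S}_{0}$ is Hermitian with spectrum in $[-\alpha,\alpha]$; hence the spectra of $\Sigma^{\delta}$ and $\mathcal{S}_{0}$ are separated by at least $\beta$, the Sylvester operator is invertible, and the integral representation $\mathbf{G}=\int_{0}^{\infty}e^{t\mathcal{S}_{0}}\,\mathbf{C}\,e^{-t\Sigma^{\delta}}\,dt$ is valid. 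Bounding the integrand by $\|e^{t\mathcal{S}_{0}}\|_{\ell_2}\,\|\mathbf{C}\|_{\ell_2}\,\|e^{-t\Sigma^{\delta}}\|_{\ell_2}\le e^{t\alpha}e^{-t(\alpha+\beta)}\|\mathbf{C}\|_{\ell_2}$ and integrating yields $\|\mathbf{G}\|_{\ell_2}\le\|\mathbf{C}\|_{\ell_2}/\beta$. Since $\|G\|_{\ell_2}\le\|\mathbf{G}\|_{\ell_2}$ and $\|\mathbf{C}\|_{\ell_2}\le\delta$, this gives $\|U_{\perp}^{*}U^{\delta}\|_{\ell_2}\le\delta/\beta$, which by Step~1 is the claimed bound.

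The main obstacle is this last step: extracting the clean factor $1/\beta$ in the spectral norm. The Sylvester/integral route above delivers the decay rate $\beta$ cleanly, but combining the two stacked residual blocks naively produces a $\sqrt{2}$ that must be removed to reach exactly $\delta/\beta$; the standard fix is to pass at the outset to the Hermitian dilations $\left(\begin{smallmatrix}0&B\\ B^{*}&0\end{smallmatrix}\right)$, turning singular subspaces into eigenspaces and reducing the whole statement to the Davis--Kahan $\sin\Theta$ theorem, which delivers the $\max$ form without the spurious constant. I would also need to state precisely the orthogonality built into the decompositions $B=Q+Q_{0}$ and $B^{\delta}=Q^{\delta}+Q_{0}^{\delta}$ (so that $U_{\perp}^{*}Q=0$ and $\Sigma_{0}$ really is the sub-$\alpha$ block), and to confirm the equal--dimension reduction in Step~1; in the application both are immediate because $Q_{0}=0$ and $B^{\delta}$ is truncated at rank $M$.
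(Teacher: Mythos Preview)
The paper does not prove this statement: Theorem~\ref{wed} is stated as a quotation of Wedin's classical result~\cite{Wedin72} and is simply invoked, without argument, at the end of the proof of Theorem~\ref{th:musicnoise}. So there is no ``paper's own proof'' to compare against; your proposal goes well beyond what the paper does by actually sketching the standard $\sin\Theta$ argument.

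On the substance of your sketch: the overall strategy (reduce to the principal-angle matrix $G=U_{\perp}^{*}U^{\delta}$, derive coupled residual equations, and solve the resulting Sylvester system using the spectral gap) is exactly the classical route and is sound. Two points to tighten. First, your residuals come out as $U_{\perp}^{*}EV^{\delta}$ and $V_{\perp}^{*}E^{*}U^{\delta}$, i.e.\ they involve the \emph{perturbed} singular vectors, whereas the $\delta$ in the theorem statement is $\max(\|EV\|_{\ell_2},\|E^{*}U\|_{\ell_2})$ with the \emph{unperturbed} $U,V$; to land on that form you should run the projection the other way around (start from $BV=U\Sigma$, $B^{*}U=V\Sigma$ and project onto $U^{\delta}_{\perp}$, $V^{\delta}_{\perp}$), which is precisely how Wedin sets it up. Second, the partner equation should carry $\Sigma_{0}^{*}$ rather than $\Sigma_{0}$ in the rectangular case, so that the stacked operator $\mathcal{S}_{0}$ is genuinely Hermitian; this is harmless but worth writing correctly. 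Your remark that the Hermitian-dilation / Davis--Kahan route removes the spurious $\sqrt{2}$ is the right fix for the constant.
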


\section{The single frequency phase retrieval problem}
\label{sec:phaseR}%
 We consider here the same imaging configuration as in subsection \ref{sec:sfmr}, where signals  of only one frequency $\omega$ are sent from an array of transducers that emit  and record the signals. However, we assume now that only the intensities of the signals can be measured, so only the amplitudes square of the data vectors $\bfb_{q}=  \tAc  \bfrho_{q}$ are recorded.
Then, the  phase retrieval problem is to find the unknown vector $\bfrho$ from 
the family of quadratic equations 
\begin{equation}
\label{eq:phretrieval}
|{ \tAc}  \bfrho_{q}|^2= |\bfb_{q}|^2\, , \quad q=1,\dots,\aleph,
\end{equation}
{ where $|\cdot|$ is } understood component wise.

\subsection{A single receiver}

Problem (\ref{eq:phretrieval}) is
nonlinear and nonconvex and, hence, difficult to solve.
In fact, it is in general NP hard \cite{Sanz85}.
However, if an appropriate set of illuminations is used, we can take advantage of the polarization identity 
\begin{eqnarray}
2 \,\mbox{Re} < u, v> &=& |u+ v|^2 - |u|^2 - |v|^2 \nonumber\\
2 \,\mbox{Im} < u, v> &=& |u - i v|^2  - |u|^2 -  |v|^2 \,\, 
\label{eq:polariden}
\end{eqnarray}
to solve simple linear systems of the form
\begin{equation}\label{familyInt0}
{ \tAc}\,  \bfrho_{q} = \bfm^{(r)}_{q}\, , \quad q=1,\dots,\aleph,
\end{equation}
for a fixed receiver location $\vect x_r$. The polarization identity  allows us to find the inner product between two complex numbers and, therefore, its phase differences.
In (\ref{familyInt0}), $\bfm^{(r)}_{q}$ is the vector whose $i$th component is 
the correlation $\overline{b^{(r)}_{q}}b^{(r)}_{{ e}_i}$ between two signals measured at the  receiver
$\vx_r$; one corresponding to a general illumination $\vect \wf_q(\omega)$ and the other to an illumination  $\vect { e}_i$ 
whose entries are all zero except the $i$th entry which is one.  
Using the polarization identity (\ref{eq:polariden}) we can obtain $\overline{b^{(r)}_{q}}b^{(r)}_{{ e}_i}$ from 
linear combinations of the magnitudes squared $|b^{(r)}_{q}|^2$, $|b^{(r)}_{{ e}_i}|^2$, $|b^{(r)}_{q}+b^{(r)}_{{ e}_i}|^2$, 
and $|b^{(r)}_{q}+ib^{(r)}_{{ e}_i}|^2$ \cite{Moscoso16}. A physical interpretation of (\ref{familyInt0}) is as follows. Send an illumination $\vect \wf_q(\omega)$, collect the response at $\vx_r$, time reverse the received signal at $\vx_r$, and send it back to probe the medium again. Then,  $\bfm^{(r)}_{q}$ represents the signals recorded at all receivers $\vx_i$, $i=1, \dots, N$.

To wrap up, if the phases are not measured but we control the illuminations, the images can be formed  by solving (\ref{familyInt0})
using a MUSIC algorithm with several vectors $\bfm^{(r)}_{q}$  obtained in the data acquisition process. 
In the approach explained here the receiver is fixed. 
In the next subsection we explain how  to image with the MUSIC algorithm using intensity data gathered at several 
receivers.

\subsection{Several receivers}

In \cite{Novikov14}, we propose to image using MUSIC with the frequency  interferometric  matrix $\Mm(\omega)=\Pm^*(\omega)\Pm(\omega)$ which can be obtained from intensity-only measurements if the illuminations are controlled. The columns of this matrix are the vectors $\bfm^{(r)}_{q}$, $r=1,\dots,N$, obtained with the illuminations $\vect \wf_q=\vect { e}_i$, $i=1,\dots,N$.
Observe that each entry of the interferometric  matrix $\Mm(\omega)$ can be written as
\[
m_{ij} =\sum_{k=1}^N b_{k i} \bar{b}_{k j},
\]
where $b_{k i}=|b_{k i}|e^{i\theta_{ki}}$ denotes the signal (with phase) received at $\vx_k$ for illumination $\vect { e}_i$. To recover $b_{k i} \bar{b}_{k j}$ it suffices to measure the amplitudes $|b_{k i}|$, $|b_{k j}|$ and to 
find the {\it phase differences} $\theta_{ki} - \theta_{kj}$, $k=1,\dots, N$. 
The amplitudes (squared) are recorded using the  illumination vectors $\vect\we_{i}$, $i=1,2,\dots,N$. The phase differences can be recovered as follows. Since  
\[
  \theta_{ki} - \theta_{kj}=  (\theta_{k1}-\theta_{kj}) - (\theta_{k1} - \theta_{ki}),
\]
it suffices to find the  phase differences $\theta_{k1}-\theta_{kj}$ for $j=2,\dots, N$, which means that only the phase differences between 
the first vector $\vect b_1$ and all the other vectors are needed. If all $b_{k 1} \neq 0$, these
phase differences can be found from 
the polarization identities (\ref{eq:polariden}).
When the image is sparse, the assumption $b_{k 1} \neq 0$ is not restrictive because of the uncertainty principle~\cite{DS}.

Since matrices $\Mm(\omega)$  and $\Pm(\omega)$ 
have the same column space MUSIC can form the images using the SVD of $\Mm(\omega)$ and the column vectors of matrix (\ref{eq:matrix1freqbis}) as imaging vectors.


\begin{thebibliography}{90}
\bibitem{Borcea07} {\sc L. Borcea, G. Papanicolaou, and C.Tsogka,} {\em Optimal waveform design for array imaging,} Inverse Problems 23 (2007), pp. 1973--2021.
\bibitem{BMPT16} {\sc L. Borcea, M. Moscoso, G. Papanicolaou and C. Tsogka}, {\em Synthetic aperture imaging of directional and frequency dependent reflectivity}, SIAM J. Imaging Sci. 9 (2016), pp.~52--81. 
\bibitem{Candes13} {\sc E. J. Cand\`{e}s, Y. C. Eldar, T. Strohmer, and V. Voroninski}, {\em Phase Retrieval via Matrix Completion,} SIAM J. Imaging Sci. 6 (2013), pp.~199--225.
\bibitem{CMP11} {\sc A. Chai, M. Moscoso and G. Papanicolaou}, {\em Array imaging using intensity-only measurements}, Inverse Problems 27 (2011), 015005.
\bibitem{CMP13} {\sc A. Chai, M. Moscoso and G. Papanicolaou}, {\em Robust imaging of localized scatterers using the singular value decomposition and $\ell_1$ optimization}, Inverse Problems 29 (2013), 025016.
\bibitem{CMP14} {\sc A. Chai, M. Moscoso and G. Papanicolaou}, {\em Imaging strong localized scatterers with sparsity promoting optimization}, SIAM J. Imaging Sci. 10 (2014), pp.~1358--1387.  
\bibitem{CMP16} {\sc A. Chai, M. Moscoso and G. Papanicolaou}, {\em Array imaging of localized objects in homogeneous and heterogeneous media}, Inverse Problems 32 (2016), 104003.
\bibitem{Cotter05}  {\sc S.~F. Cotter, B.~D. Rao, K. Engan and K. Kreutz-Delgado}, {\em Sparse solutions to linear inverse problems with multiple measurement vectors}, IEEE Trans. Signal Process.53 (2005), pp.~2477--2488.
\bibitem{DeGraaf98} {\sc S. R. DeGraaf,} {\em SAR imaging via modern 2-D spectral estimation methods,} IEEE Transactions on Image Processing 7 (1998), pp. 729--761.
\bibitem{DMG05} {\sc A. Devaney, E. Marengo and F. Gruber}, {\em Time-reversal-based imaging and inverse scattering of multiply scattering point targets}, J. Acoust. Soc. Am. 118 (2005), pp.~3129--3138.
\bibitem{DS} D. L. Donoho, P. B. Stark: Uncertainty principles and signal recovery. SIAM J. Appl. Math. 49 (1989), 906--931.
\bibitem{Fannjiang11} {\sc A. C. Fannjiang,} {\em The MUSIC algorithm for sparse objects: a compressed sensing analysis}, Inverse Problems 27 (2011) 035013 (32pp).
\bibitem{Fienup82}  {\sc J. R. Fienup,} {\em Phase retrieval algorithms: a comparison,} Applied Optics 21 (1982), pp.~2758--2768.
\bibitem{GS72}  {\sc R. W. Gerchberg and W. O. Saxton},  {\em A practical algorithm for the determination of the phase from image and diffraction plane pictures,}  Optik 35 (1972), pp.~237--246.
\bibitem{Griesmaier17} {\sc R. Griesmaier and C. Schmiedecke, }{\em  A multifrequency MUSIC algorithm for locating small inhomogeneities in inverse scattering,} Inverse Problems 33 (2017) 035015 (17pp).
\bibitem{Hayes96} {\sc M. H. Hayes,} {\em Statistical Digital Signal Processing and Modeling}, John Wiley \& Sons, Inc., New York, NY, USA, 1996.
\bibitem{Hua90} {\sc Y. Hua and T.K. Sarkar}, {\em Matrix pencil method for estimating parameters of exponentially damped/undamped sinusoids in noise}, IEEE Trans.  Acoustics Speech, and Sig. Proc.,  38 (1990), pp.~814--824. 
\bibitem{Kaveh86} {\sc M. Kaveh and A. Barabell,} {\em The statistical performance of the MUSIC and the minimum norm algorithms in resolving plane waves in noise,}
IEEE Transactions on Acoustics, Speech and Signal Processing 34 (1986), pp. 331--341.
\bibitem{Kumaresan90} {\sc R. Kumaresan and D.W. Tufts}, {\em Estimating the Parameters of Exponentially Damped Sinusoids ans Pole-Zero Modeling in Noise,} IEEE Trans.  Acoustics Speech, and Sig. Proc.,  ASSP 30 (1990), pp.~833--840.
\bibitem{Lee12} {\sc K. Lee, Y. Bresler and M. Junge}, {\em Subspace methods for joint sparse recovery}, IEEE Transactions on Information Theory, 58 (2012), pp. 3613--3641.
\bibitem{Liao15} {\sc W. Liao,} {\em MUSIC for Multidimensional Spectral Estimation: Stability and Super-Resolution,} IEEE Transactions on Signal Processing 63 (2015), pp. 6395--6406.
\bibitem{Liao16} {\sc W. Liao  and A. Fannjiang,} {\em A  MUSIC for single-snapshot spectral estimation: stability and super-resolution,} Appl. Comput. Harmon. Anal. 40 (2016), pp~33--67.
\bibitem{MCW05} {\sc D. Malioutov, M. Cetin and A. Willsky}, {\em A sparse signal reconstruction perspective for source localization with sensor arrays}, IEEE Trans. on Signal Processing 53 (2005), pp.~3010--3022.
\bibitem{Moscoso16} {\sc M. Moscoso, A. Novikov  and G. Papanicolaou,} {\em Coherent imaging without phases,}  SIAM J. Imaging Sci. 9 (2016), pp.~1689--1707.
\bibitem{Moscoso17} {\sc M. Moscoso, A. Novikov, G. Papanicolaou and C. Tsogka,} {\em Multifrequency interferometric imaging with intensity-only measurements,}  SIAM J. Imaging Sci. 10 (2017), pp.~1005--1032.
\bibitem{Novikov14} {\sc A. Novikov, M. Moscoso and G. Papanicolaou,} {\em Illumination strategies for intensity-only imaging,} SIAM J. Imaging Sci. 8 (2015), pp.~1547--1573.
\bibitem{Prada95} {\sc C.  Prada,  J.  Thomas,  and  M.  Fink,} {\em The  iterative  time  reversal  process:  analysis  of  the  convergence,} J.  Acoust.  Soc.  Am.  97  (1995),  pp.  62?71.
\bibitem{Prony}  {\sc G. R. B. de Prony} {\em Essai Experimentale et Analytique}, J. de L'Ecole Polytechnique 2 (1795), pp. 24--76.
\bibitem{Sanz85} {\sc J. L. C. Sanz}, {\em Mathematical Considerations for the Problem of Fourier Transform Phase Retrieval from Magnitude,} SIAM J. Appl. Math. 45 (1985), pp.~651--664.
\bibitem{Rubsamen09} {\sc M. Rubsamen and A. B. Gershman,} {\em Direction-of-Arrival Estimation for Nonuniform Sensor Arrays: From Manifold Separation to Fourier Domain MUSIC Methods,} IEEE Transactions on Signal Processing 57 (2009), pp. 588--599.
\bibitem{Schmidt86} {\sc R.~O. Schmidt}, {\em Multiple emitter location and signal parameter estimation}, IEEE Trans. Antennas Propag. 34 (1986), pp.~276--280.
\bibitem{Stoicabook} {\sc  P. Stoica and R.L. Moses}, {\em Spectral Analysis of Signals}, Pearson Prentice Hall, 2005.
\bibitem{TROPP06-2-0}  {\sc J. Tropp, A Gilbert, and M Strauss}, {\em Algorithms for simultaneous sparse approximation. Part I: Greedy pursuit}, Signal Processing 86 (2006), pp.~572--588.
\bibitem{TROPP06-2} {\sc J. Tropp}, {\em Algorithms for simultaneous sparse approximation. Part II: Convex relaxation}, Signal Processing 86 (2006), pp.~589--602.
\bibitem{Wedin72} {\sc P. Wedin,} {\em Perturbation bounds in connection with singular value decomposition.} BIT Numerical Mathematics 12 (1972), pp.~99--111.
\bibitem{Weyl12} {\sc H.  Weyl,} {\em  Das asymptotische Verteilungsgesetz der Eigenwert linearer partieller Differential-gleichungen (mit einer Anwendung auf der Theorie der Hohlraumstrahlung)}, 
Mathematische Annalen 71 (1912), pp.~441--479.             
\end{thebibliography}
 \end{document}